\DeclareMathOperator{\kernel}{ker}
\DeclareMathOperator{\im}{Im}
\DeclareMathOperator{\vol}{Vol}
\DeclareMathOperator{\PH}{PH}
\def\R{\mathbb{R}}
\def\F{\mathbb{F}}
\def\Z{\mathbb{Z}}
\def\birth{\mathrm{birth}}
\def\death{\mathrm{death}}
\def\cC{\mathcal{C}}
\def\cF{\mathcal{F}}
\def\cM{\mathcal{M}}
\def\cP{\mathcal{P}}
\def\cR{\mathcal{R}}
\def\cU{\mathcal{U}}
\def\cS{\mathcal{S}}
\def\cY{\mathcal{Y}}
\newcommand{\E}{\mathbb{E}} 
\newcommand{\mean}[1] {\E\left\{{#1}\right\}}
\newcommand{\ind}{\boldsymbol{\mathbbm{1}}} 
\newcommand{\set}[1]{\left\{#1\right\}}
\newcommand{\norm}[1]{\left\|#1\right\|}
\newcommand{\param}[1]{\left(#1\right)}
\newcommand{\abs}[1] {\left| {#1}\right|}
\newcommand{\floor}[1] {\left\lfloor{#1}\right\rfloor}
\newcommand{\prob}[1]{\mathbb{P}\left(#1\right)}
\newcommand{\eps}{\epsilon}
\providecommand{\setthms}[1]{#1}
\newtheorem{lem}{Lemma}[section]
\newtheorem{thm}[lem]{Theorem}
\theoremstyle{definition}
\newtheorem{defn}[lem]{Definition}
\newcommand{\cech}{\v{C}ech }
\newcommand{\iid}{\mathrm{i.i.d.}}
\newcommand{\ninf}{n\to\infty}
\newcommand{\limninf}{\lim_{\ninf}}
\numberwithin{equation}{section}
\def\ve{\vec{e}}
\def\ve{\varepsilon}
\newcommand{\nodraw}[1]{}
\title{Maximally persistent cycles in random geometric complexes}
\author{Omer Bobrowski}
\address{Omer Bobrowski - Duke University, Department of Mathematics}
\email{omer@math.duke.edu}
\thanks{O.B.\ gratefully acknowledges the support NSF DMS-1418261 and NSF DMS-1209155.}
\author{Matthew Kahle}
\address{Matthew Kahle - The Ohio State University, Department of Mathematics}
\email{mkahle@math.osu.edu}
\thanks{M.K.\ gratefully acknowledges support from DARPA \#N66001-12-1-4226, NSF \#CCF-1017182, NSF \#DMS-1352386, the Institute for Mathematics and its Applications, and the Alfred P.\ Sloan Foundation}
\author{Primoz Skraba}
\address{Primoz Skraba - Jozef Stefan Institute, A.I. Laboratory\\Ljubljana, Slovenia}
\email{primoz.skraba@ijs.si}
\thanks{P.S.\ was funded  by the EU project TOPOSYS (FP7-ICT-318493-STREP)}
\date{\today}                                   
\begin{document}


\begin{abstract}

We initiate the study of persistent homology of random geometric simplicial complexes. Our main interest is in maximally persistent cycles of degree-$k$ in  persistent homology, for a either the \cech or the Vietoris--Rips filtration built on a  uniform Poisson process of intensity $n$ in the unit cube $[0,1]^d$. This is a natural way of measuring the largest ``$k$-dimensional hole"  in a random point set.
This problem is in the intersection of geometric probability and algebraic topology, and is naturally motivated by a probabilistic view of topological inference.

We show that for all $d \ge 2$ and $1 \le k \le d-1$ the maximally persistent cycle has (multiplicative) persistence of order
$$ \Theta \left( \left( \frac{\log n}{\log \log n} \right)^{1/k} \right),$$
with high probability, characterizing its rate of growth as $n \to \infty$. The implied constants depend on $k$, $d$, and on whether we consider the Vietoris--Rips or \cech filtration.

\end{abstract}


\maketitle


\section{Introduction}


The study of topological properties of random graphs has a long history, dating back to classical results on the connectivity, cycles, and largest components in Erd\H{o}s--Renyi graphs \cite{ER59,ER60}. Generalizations have been developed in several directions. One direction is to consider different models of random graphs (see, e.g.~\cite{bollobas2003mathematical,Penrose}). Another direction is to consider higher-dimensional topological properties, resulting in the study of \emph{random simplicial complexes} rather than random graphs, where in addition to vertices and edges the structure consists also of triangles, tetrahedra and higher dimensional simplexes (see, e.g.~\cite{aronshtam2015does,clique,flag, linial2006homological}). The study of random simplicial complexes focuses mainly on their homology, which is a natural generalization of the notions of connected components and cycles in graphs. Homology is an algebraic topology framework that is used to study cycles in various dimensions, where (loosely speaking) a $k$-dimensional cycle can be thought of as the boundary of a $k+1$ dimensional solid (see Section \ref{sec:back} for more details).

In random \emph{geometric} simplicial complexes,  the vertices are generated by a random point process (e.g. Poisson) in a metric space, and then geometric conditions are applied to determine which of the simplexes should be included in the complex. The two most studied models are the random \cech and Vietoris-Rips complexes (see Section \ref{sec:back} for definitions).
Several recent papers have studied various aspects of the topology of these complexes (see \cite{bobrowski_distance_2011, BM14,bob_vanishing,geometric,Meckes, YA12, YSA14} and the survey \cite{BK14}). These papers contain theorems which characterize the phase transitions where homology appears and disappears, estimates for the Betti numbers (the number of $k$-dimensional cycles), limiting distributions, etc. While this line of research presents a  deep and interesting theory, it is also motivated by data analysis applications.

Topological data analysis (TDA) is a recently emerging field that focuses on extracting topological features from sampled data, and uses them as an input for various data analytic and statistical algorithms. The main idea behind it is that topological properties could help us understand the structure underlying the data, and provide us with a set of features that are robust to various types of deformations (cf. \cite{Carlsson09,carlsson2008local,Ghrist08}). Geometric complexes play a key role in computing topological features from a finite set of data points. The construction of these complexes usually depends on one or more parameters (e.g. radius of balls drawn around the sample points), and the ability to properly extract topological features depends on choosing this parameter correctly. One of the most powerful tools in TDA is a multi-scale version of homology, called \emph{persistent homology} (see Section \ref{sec:back}), which was developed mainly to solve this problem of sensitive parameter tuning. 
In persistent homology, instead of finding the best parameter values, one considers the entire range of  possible values. As the parameter values change, the observed topological features change (e.g.~cycles are created and filled in). Persistent homology tracks these changes and provides a way to measure the significance of the features that show up in this process. One way to represent the information provided by persistent homology is via \emph{barcodes}, see Figure \ref{fig:ph_example}. Here, every bar corresponds to a feature in the data and its endpoints correspond to the times (parameter value) where the feature was created and terminated.
The underlying philosophy in TDA is that topological features that survive (or persist) through a long range of parameter values are significant and related to real topological structures in the data (or the ``topological signal"), whereas ones with a shorter lifespan are artifacts of the finite sampling, and correspond to noise (see \cite{Wass14}). This approach motivates the following question: How long does a ``long range" of parameters (or a long bar in the barcode) have to be in order to be considered significant? 
Phrased differently - how long should we expect this range to be, if the sample points were entirely random, without any underlying structure or features? 
This is the main question we try to answer in this paper. 

To be more specific, in this paper we study the case where the data points are generated by a homogeneous Poisson process in the unit $d$-dimensional cube $[0,1]^d$ ($d>1$) with intensity $n$, denoted by $\cP_n$. We consider the persistent homology of both the \cech complex $\cC(\cP_n,r)$ and the Rips complex $\cR(\cP_n,r)$, where the scale parameter $r$ is the radius of the balls used to create these complexes (see Section \ref{sec:back}). We denote by $\Pi_k(n)$ the maximal persistence of a cycle in the degree $k$ persistent homology ($1\le k \le d-1$) of either the \cech or the Rips complex. Note that $\Pi_k(n)$ is a property of the persistent homology, where we consider all possible radii, and therefore it does not depend $r$.
Our main result shows that, with high probability,
\[
	\Pi_k(n) \sim \param{\frac{\log n}{\log\log n}}^{1/k},
\]
in the sense that $\Pi_k(n)$ can be bounded from above and below by a matching term up to a constant factor.
The precise definitions and statements are presented in Section \ref{sec:main}.
The proofs for the upper and lower bounds require very different techniques. To prove the upper bound we present a novel `isoperimetric-type' statement (Lemma \ref{lem:min_comp}) that links the persistence of a cycle to the number of vertices that are used to form it. The lower bound proof uses an exhaustive search for a specific construction that guarantees the creation of a persistent cycle.

In addition to proving the theoretical result,  in Section \ref{sec:experimental} we also present extensive numerical experiments confirming the computed bounds and empirically computing the implied constants. These results also suggest a conjectural law of large numbers. Finally, we note that while the results in this paper are presented for the homogeneous Poisson process on a $d$-dimensional cube, they should hold with minor adjustments also to non-homogenous processes as well as for shapes other than the cube. We also predict that our statements will hold for more generic point processes (e.g.~weakly sub-Poisson processes), using some of the statements made in \cite{YA12}. The detailed analysis of these more generic cases is left as future work. 

\vspace{10pt}

\textbf{Earlier work:}\ 
The study of the topology of random geometric complexes has been growing rapidly in the past decade. Most of the results so far are related to homology rather than persistent homology (i.e.~fixing the parameter value).
The study in \cite{bob_vanishing,geometric} focuses mainly on the phase transitions for appearance and vanishing of homology, which can be viewed as higher dimensional generalizations of the phase transition for connectivity in random  graphs. In \cite{bobrowski_distance_2011,BM14,Meckes,YSA14} more emphasis was given to the distribution of the Betti numbers, namely the number of cycles that appear. Similar questions for more general point processes have also been considered in \cite{YA12}. In \cite{adler2013crackle,owada2015limit} simplicial complexes generated by distributions with an unbounded support were studied from an extreme value theory perspective.   The recent survey \cite{BK14} overviews recent progress in this area.

The study of random persistent homology, on the other hand, is at its very initial stages.
Recall that the $0$-th homology represent the connected components in a space. Thus, the results in \cite{appel_connectivity_2002,penrose_longest_1997} about the connectivity threshold in random geometric graphs could be viewed as related to the $0$-th persistence homology of either the \cech or the Rips complex.
The first study of persistent homology in degree $k\ge 1$ for a random setting was for $n$ points chosen uniformly i.i.d.\ on a circle by Bubenik and Kim \cite{BK07}. In this setting, they used the theory of order statistics to describe the limiting distribution of the persistence diagram.
Another direction of study is the persistence diagrams of random functions. In \cite{omereuler}, the authors study the ``persistent Euler characteristic" of Gaussian random fields.

Another line of research (see e.g.~\cite{bobrowski_topological_2014,  chazal2013bootstrap,ChazalFLRW14,ChazalGLM14,chazal2011scalar,chazal2013persistence,Wass14}) focuses on statistical inference using  persistent homology, and include results about confidence intervals, consistency and robustness for topological estimation, subsampling and bootstrapping methods, and more.

Finally, we point out the earlier work in geometric probability \cite{BGAS13}, measuring the largest convex hole for a set of random points in a convex planar region $R$.  A convex hole is generated when there is a subset of points for which the convex hull is empty (i.e.~contains no other points from the set). The size of a convex hole is then measured combinatorially, as the number of vertices generating the hole.  In \cite{BGAS13} it is shown that the largest hole has $\Theta \left( \log n / \log \log n \right)$ vertices, regardless of the shape of the ambient convex region $R$. In this paper we are also measuring the size of the largest hole, but in a very different sense. We are using the algebraic-topological notion of holes (via persistent homology), rather than combinatorial notion of counting vertices, so as far as we can tell the fact that these two ways of measuring the size of the largest hole have the same right of growth (when $d=2$ and $k=1$) is something of a coincidence.


As far as we know, this article presents the first detailed probabilistic analysis for persistent $k$th homology of random geometric complexes, for $k\ge 1$.

\vspace{10pt}
The structure of the paper is as follows. In Section \ref{sec:back} we provide the topological and probabilistic building blocks we will use throughout the paper. In Section \ref{sec:main} we present the main result - the asymptotic behavior of maximally persistent cycles.
In Sections \ref{sec:upper} and \ref{sec:lower} we provide the main parts of the proof for the random \cech complex (upper and lower bounds, respectively). Some parts of the proofs require more knowledge in algebraic topology than the others, and we present those in Section \ref{sec:topo} (including the proof for the Rips complex). Finally, in Section \ref{sec:experimental} we present simulation results, complementing the main (asymptotic) result of the paper.


\section{Background}\label{sec:back}


In this section we provide a brief introduction to the topological and probabilistic notions used in this paper. 


\subsection{Homology}\label{sec:homology}


We wish to introduce the concept of homology here in an intuitive rather than in a rigorous way. For a comprehensive introduction to homology, see \cite{Hatcher} or \cite{munkres_elements_1984}.
Let $X$ be a topological space, and $\F$ a field. The \textit{homology of $X$ with coefficients in $\F$} is a set of vector spaces $\set{H_k(X)}_{k=0}^\infty$, which are topological invariants of $X$ (i.e.~they are invariant under homeomorphisms).
We note that the  standard notation is $H_k(X, \F)$ where $\F$ denotes the coefficient ring, but we  suppress the field and let $H_k(X)$ denote homology with $\F$ 
coefficients throughout this article.
 
The dimension of the zeroth homology $H_0(X)$ is equal to the number of connected components of $X$. For $k\ge 1$, the basis elements of the $k$-th homology $H_k(X)$ correspond to $k$-dimensional ``holes" or (nontrivial-) ``cycles" in $X$. An intuitive way to think about a $k$-dimensional cycle is as the result of taking the boundary of a $(k+1)$-dimensional body.
For example, if $X$ a circle then $H_0(X) \cong \F$, and $H_1(X) \cong \F$. If $X$ is a $2$-dimensional sphere then $H_0(X)\cong \F$ and $H_2(X) \cong \F$, while $H_1(X)\cong\set{0}$ (since every loop on the sphere can be shrunk to a point). In general if $X$ is a $n$-dimensional sphere, then
\[
H_k(X) \cong \begin{cases} \F & k=0,n \\
0 & \mbox{otherwise}.
\end{cases}
\]
%
%

We will use $H_*(X)$ when making a statement that applies to all the homology groups simultaneously. In addition to providing information about spaces, homology is also used to study mappings between spaces. If $f: X\to Y$ is a map between two topological spaces, then it induces a map in homology $f_*:H_*(X)\to H_*(Y)$. This map is a linear transformation between vector spaces which tells us how cycles in $X$ map to cycles in $Y$. These mappings are important when discussing persistent homology.

Finally, we say that two spaces $X,Y$ are \emph{homotopy equivalent}, denoted by $X\simeq Y$, if $X$ can be continuously deformed to $Y$ (loosely speaking). In particular, if $X\simeq Y$ then $H_*(X)\cong H_*(Y)$ (isomorphic). For example, a circle, an empty triangle and an annulus are all homotopy equivalent.


\subsection{The \cech and Vietoris-Rips complexes}


As mentioned earlier, the \cech and the Rips complexes are often used to extract topological information from data. These complexes are abstract simplicial complexes \cite{Hatcher} and in our case will be generated by a set of points in $\R^d$.
These complexes are tied together with the union of balls we define as
\begin{equation}\label{eq:union_balls}
	\cU(\cP, r) = \bigcup_{p\in \cP}B_r(p),
\end{equation}
where $\cP\subset \R^d$, and $B_r(p)$ is a $d$-dimensional ball of radius $r$ around $p$. Note that the set $\cP$ does not have to be discrete, in which case we can think of $\cU(\cP,r)$ as a ``tube" around $\cP$. The definitions of the complexes are as follows.


\begin{defn}[\cech complex]\label{def:cech_complex}
Let $\cP = \set{x_1,x_2,\ldots,x_n}$ be a collection of points in $\R^d$, and let $r>0$. The \cech complex $\cC(\cP, r)$ is constructed as follows:
\begin{enumerate}
\item The $0$-simplices (vertices) are the points in $\cP$.
\item A $k$-simplex $[x_{i_0},\ldots,x_{i_k}]$ is in $\cC(\cP,r)$ if $\bigcap_{j=0}^{k} {B_{r}(x_{i_j})} \ne \emptyset$.
\end{enumerate}
\end{defn}


\begin{defn}[Vietoris--Rips complex]\label{def:rips_complex}
Let $\cP = \set{x_1,x_2,\ldots,x_n}$ be a collection of points in $\R^d$, and let $r>0$. The Vietoris--Rips complex $\cR(\cP, r)$ is constructed as follows:
\begin{enumerate}
\item The $0$-simplices (vertices) are the points in $\cP$.
\item A $k$-simplex $[x_{i_0},\ldots,x_{i_k}]$ is in $\cR(\cP,r)$ if $B_r(x_{i_j})\cap B_r(x_{i_l}) \ne \emptyset$ for all $0\le j,l \le k$.
\end{enumerate}
\end{defn}


Note that the Rips complex $\cR(\cP,r)$ is the flag (or clique) complex built on top of the geometric graph $G(\cP,2r)$, where two vertices $x_i,x_j$ are connected if and only if $\norm{x_i-x_j} \le 2r$.
The difference between the \cech and the Rips complexes, is that for the \cech complex we require all $k+1$ balls to intersect in order to include a face, whereas for the Rips complex we only require pairwise intersections between the balls.
Figure \ref{fig:cech} shows an example for the \cech and Rips complexes constructed from the same set of points and the same radius $r$, and highlights this difference.

Part of the importance of the \cech complex stems from the following statement known as the ``Nerve Lemma" (see \cite{borsuk_imbedding_1948}). We note that the original lemma is more general then stated here, but we will only be using it in the following special case,
\begin{lem}\label{lem:nerve}
Let $\cP\subset \R^d$ be a finite set of points.
Then $\cC(\cP,r)$ is homotopy equivalent to $\cU(\cP,r),$
and in particular
\[
	H_*(\cC(\cP,r)) \cong H_*(\cU(\cP,r)).
\]
\end{lem}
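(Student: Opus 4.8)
The plan is to recognize the statement as an instance of the classical Nerve Theorem, applied to the cover of $\cU(\cP,r)$ by the balls themselves, and to verify that Theorem's hypotheses in the present geometric setting. First I would observe that $\mathcal{V}:=\{B_r(x_1),\dots,B_r(x_n)\}$ is a finite open cover of $\cU(\cP,r)$ and that every nonempty finite intersection $\bigcap_{j=0}^{k}B_r(x_{i_j})$ is convex, being an intersection of convex sets, hence contractible; thus $\mathcal{V}$ is a \emph{good cover}. By Definition~\ref{def:cech_complex}, $[x_{i_0},\dots,x_{i_k}]\in\cC(\cP,r)$ exactly when this intersection is nonempty, which is precisely the condition for $\{i_0,\dots,i_k\}$ to span a simplex of the nerve $\cN(\mathcal{V})$. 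Hence $\cC(\cP,r)=\cN(\mathcal{V})$ as abstract simplicial complexes, so it suffices to show $|\cN(\mathcal{V})|\simeq\cU(\cP,r)$; the isomorphism $H_*(\cC(\cP,r))\cong H_*(\cU(\cP,r))$ then follows by homotopy invariance of homology.

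To prove the Nerve Theorem I would use the Mayer--Vietoris blowup (homotopy colimit) complex. For each nonempty $S\subseteq\{1,\dots,n\}$ with $V_S:=\bigcap_{i\in S}B_r(x_i)\ne\emptyset$, let $\Delta^S$ be the geometric simplex on vertex set $S$, and set
\[
	Z \;:=\; \Bigl(\coprod_{S}\Delta^S\times V_S\Bigr)\big/\!\sim ,
\]
with the identifications induced by the face inclusions $\Delta^{S'}\hookrightarrow\Delta^{S}$ together with the inclusions $V_S\hookrightarrow V_{S'}$ for $S'\subseteq S$. Projecting each $S$-piece onto $V_S$ and onto $\Delta^S$ respectively yields maps $p\colon Z\to\cU(\cP,r)$ and $q\colon Z\to|\cN(\mathcal{V})|$. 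For $p$: a partition of unity $\{\rho_i\}$ subordinate to $\mathcal{V}$ (available since $\cU(\cP,r)$ is metric, hence paracompact) gives the section $s(x)=\bigl(\sum_i\rho_i(x)\,e_i,\;x\bigr)$, and a straight-line homotopy in the simplex coordinates shows $s\circ p\simeq\mathrm{id}_Z$ --- the key point being that the two simplex coordinates involved always lie in a common $\Delta^{S'}$ with $V_{S'}\ne\emptyset$ --- so $p$ is a homotopy equivalence. For $q$: the fibre over an interior point of $\Delta^S$ is $V_S$, contractible by the good-cover property, and $q$ restricted over the open star of each simplex of $\cN(\mathcal{V})$ is a homotopy equivalence, so a local-to-global gluing argument over this open cover of $|\cN(\mathcal{V})|$ upgrades $q$ to a homotopy equivalence. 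Composing $q$ with a homotopy inverse of $p$ yields $\cU(\cP,r)\simeq|\cN(\mathcal{V})|=|\cC(\cP,r)|$.

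The step I expect to be the main obstacle is exactly the one promoting ``$q$ has contractible point-fibres'' to ``$q$ is a homotopy equivalence'': this is the technical heart of the Nerve Theorem. It can be handled by the local-to-global principle above or, if one prefers an induction on $n$ --- decomposing $\cU(\cP,r)=\cU(\cP\setminus\{x_n\},r)\cup B_r(x_n)$ and matching the resulting homotopy pushout against the pushout of $|\cC(\cP\setminus\{x_n\},r)|$ with the closed star of $x_n$ along the link --- by carrying a strong enough inductive hypothesis to make the homotopy equivalences on the three pieces compatible over their overlaps. Either way one also uses the routine fact that all inclusions among the sets $V_S$ are cofibrations, which holds because each $V_S$ is an open subset of $\R^d$. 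Finally, should $B_r(\cdot)$ denote a closed ball, one either invokes the version of the Nerve Theorem for finite covers by compact convex sets (Borsuk's original statement, cited above) or reduces to the open case by a standard small enlargement of the radius that leaves the nerve unchanged; in practice, once the good-cover property has been checked, the Nerve Theorem may simply be invoked as a black box.
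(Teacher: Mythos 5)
Your proposal is correct, but it does substantially more than the paper does: the paper offers no proof of this lemma at all, simply invoking the classical Nerve Lemma of Borsuk (their citation to \emph{borsuk\_imbedding\_1948}) as a known black box, with the remark that the original statement is more general than the special case needed here. The only content actually required to apply that citation is the verification that the balls form a good cover --- every nonempty intersection $\bigcap_{j}B_r(x_{i_j})$ is convex, hence contractible --- together with the observation that $\cC(\cP,r)$ is by definition the nerve of this cover; you do both of these correctly, and that part of your argument is exactly what the paper implicitly relies on. Beyond that, you go on to sketch a proof of the Nerve Theorem itself via the Mayer--Vietoris blowup $Z$, with $p\colon Z\to\cU(\cP,r)$ shown to be an equivalence by a partition-of-unity section and a straight-line homotopy, and $q\colon Z\to|\cN(\mathcal{V})|$ upgraded from having contractible fibres to an equivalence by a local-to-global argument over open stars. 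This is the standard modern proof and is sound as a sketch; you also correctly flag the two genuine technical points (the local-to-global step for $q$, and the closed-versus-open ball issue, the latter handled either by Borsuk's compact convex version or by a small radius enlargement). The only quibble is your parenthetical that inclusions among the $V_S$ are cofibrations ``because each $V_S$ is an open subset of $\R^d$'' --- openness alone does not give a cofibration, though the sets involved are ANRs and the standard homotopy-colimit machinery applies regardless --- but this does not affect the argument. In short: the paper cites, you prove; what your route buys is self-containedness, at the cost of reproducing a classical theorem the authors deliberately outsource.
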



The Rips complex is commonly used in applications, as in some practical cases it requires less computational resources. In an arbitrary metric space, using the triangle inequality we have the following inclusions of complexes,
\begin{equation}\label{eq:rips_cech}
	\cC(\cP,r) \subset \cR(\cP,r) \subset \cC(\cP,2r).
\end{equation}
For subsets of Euclidean space, the constant $2$ can be improved (see  \cite{dsG07}).


\begin{figure}[h!]
\centering
  \includegraphics[scale=0.35]{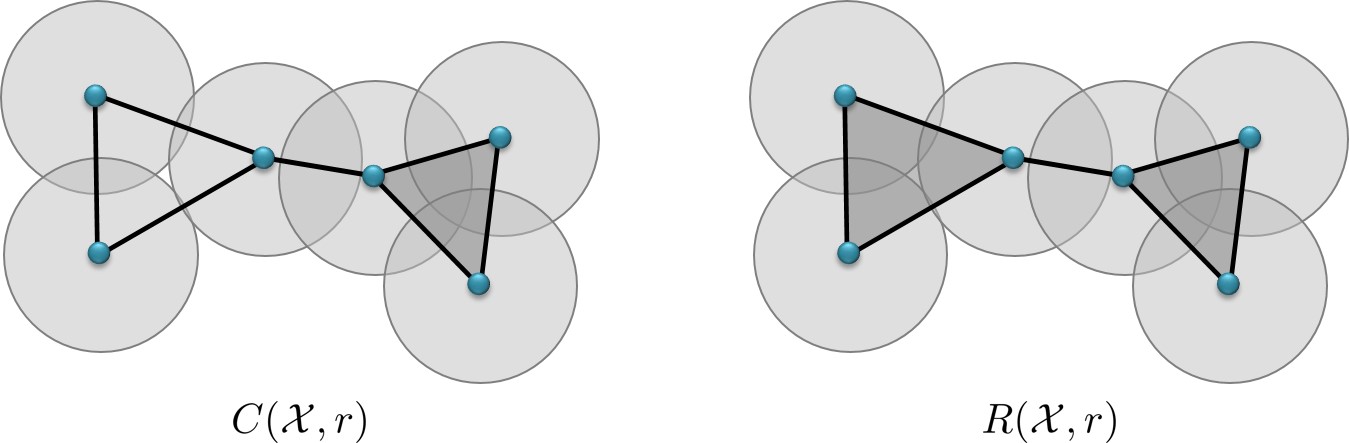}
\caption{On the left - the \cech complex $\cC(\cP,r)$, on the right - the Rips complex $\cR(\cP,r)$ with the same set of vertices and the same radius. We see that the three left-most balls do not have a common intersection and therefore do not generate a 2-dimensional face in the \cech complex. However, since all the pairwise intersections occur, the Rips complex does include the corresponding face.}
\label{fig:cech}
\end{figure}


\subsection{Persistent homology}


Let $\cP\subset \R^d$, and consider the following indexed sets -
\[
	\cU := \set{\cU(\cP,r)}_{r=0}^\infty,\quad \cC := \set{\cC(\cP,r)}_{r=0}^\infty,\quad\cR := \set{\cR(\cP,r)}_{r=0}^\infty.
\]
These three sets are examples of `filtrations' - nested sequences of sets, in the sense that $\cF_{r_1} \subset \cF_{r_2}$ if $r_1 < r_2$ (where $\cF$ is either $\cU,\cC,\textrm{or }\cR$).

As the parameter $r$ increases, the homology of the spaces $\cF_r$ may change. The \emph{persistent homology} of $\cF$, denoted by $\PH_*(\mathcal{F})$, keeps track of this process. Briefly, $\PH_k(\cF)$ contains  information about the $k$-th homology of the individual spaces $\cF_r$ as well as the mappings between the homology of $\cF_{r_{1}}$ and $\cF_{r_{2}}$ for every $r_{1} < r_{2}$ (induced by the inclusion map). The \emph{birth time}
of an element (a cycle) in $\PH_*(\cF)$ can be thought of as the value of $r$ where this element appears for the first time. 
The \emph{death time} is the value of $r$ where an element vanishes, or merges with another existing element. 

Formally, we consider a filtration with parameter values from $[0,\infty)$, the birth and death times can be defined as:
\begin{defn}\label{def:birth} The \emph{birth} of an element $\gamma\in \PH_k(\cF)$ is 
$$\gamma_{birth} := \min\set{r:  \gamma \in H_k(X_{r}) }$$ 
\end{defn}
\begin{defn}\label{def:death} The \emph{death time} of an element $\gamma\in \PH_k(\cF)$ is 
$$\gamma_{death}:=\min \set{r: \gamma \in \kernel(H_k(X_{\gamma_\birth}) \rightarrow H_k(X_{r}))}$$ 
\end{defn}

One useful way to describe persistent homology is via 
the notion of \textit{barcodes} \cite{Ghrist08}. 
A barcode for the persistent homology of a filtration $\cF$
is  a collection of graphs, one for each order of homology group. A
bar in the $k$-th graph, starting at $b$ and ending at $d$ ($b\le d$) indicates the existence of an element of $\PH_k(\cF)$ (or a $k$-cycle) whose birth and death times are $b$ and $d$ respectively. 
In Figure \ref{fig:ph_example} we present the barcode for the filtration $\cU$ where $\cP$ is a set of $50$ random points lying inside an annulus. The intuition is that the longest bars in the barcode represent  ``true" features in the data (e.g.~the connected component and the $1$-cycle in the annulus), whereas the short bars are regarded to as ``noise."
It can be shown that the pairing between birth and death times is sufficient to yield a unique barcode~\cite{zomorodian2005computing}. 

\begin{figure}[h!]
\centering
\begin{subfigure}[b]{0.75\textwidth}
{
     {\includegraphics[width=\textwidth]{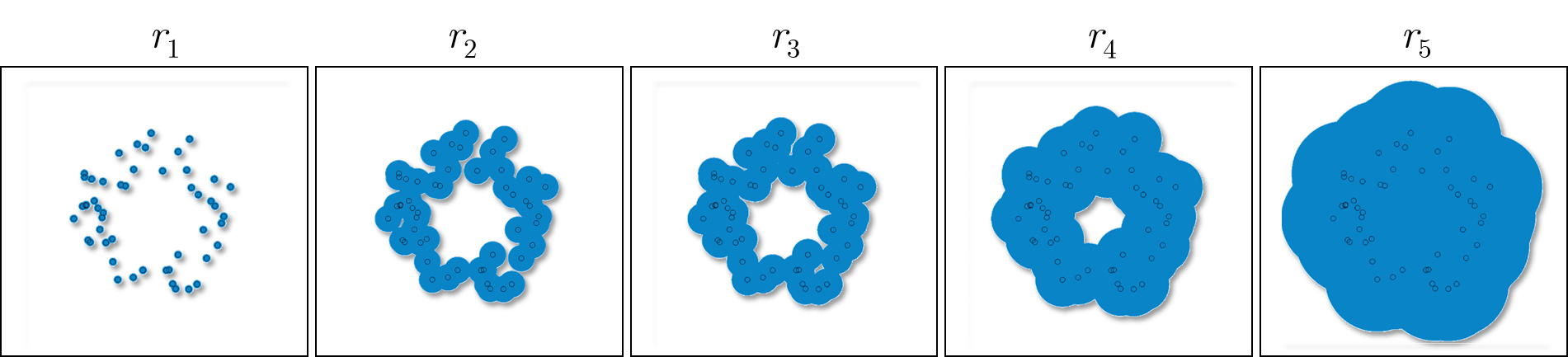}}
}
\end{subfigure}

\begin{subfigure}[b]{0.75\textwidth}

{
   {\includegraphics[width=\textwidth]{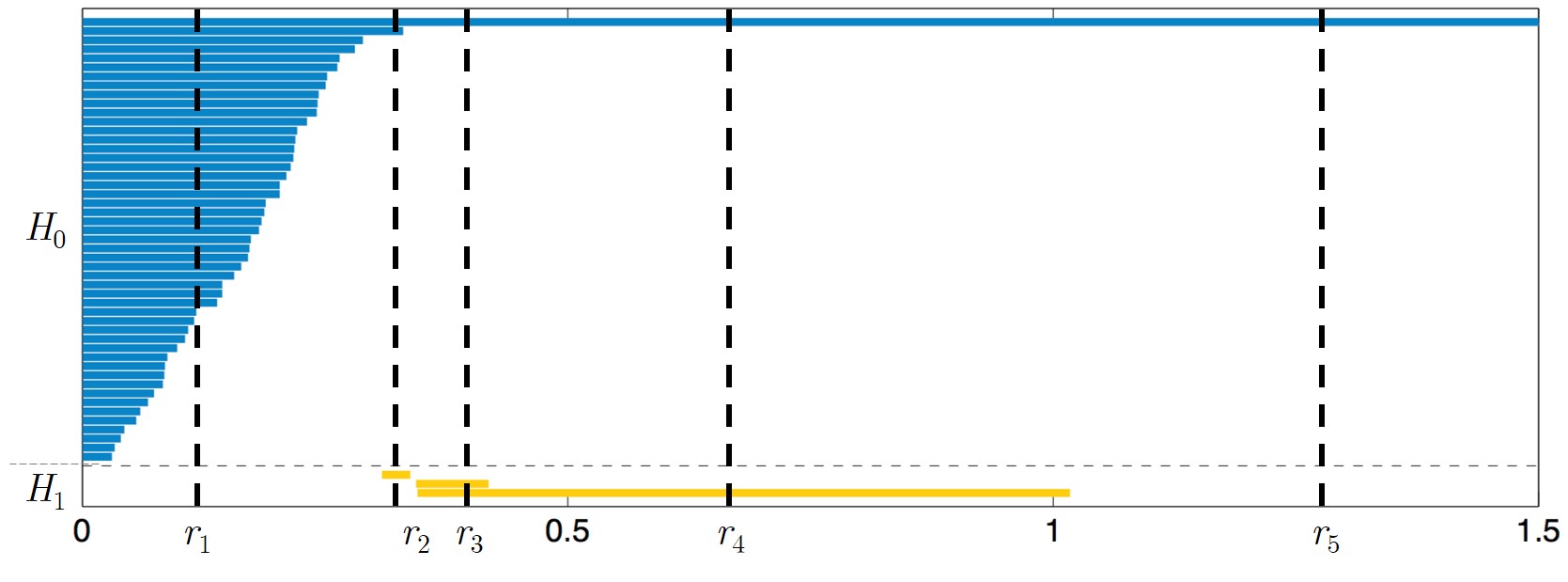}}
}
\end{subfigure}
\caption{(a) $\cF_r = \cU_r$ is a union of balls of radius $r$ around $\cP$ - a random set of $n=50$ points, uniformly distributed on an annulus in $\R^2$. We present five snapshots of this filtration.
(b) The persistent homology of the filtration $\cF$. The $x$-axis is the radius $r$, and the bars represent the cycles that born and die. For $H_0$ we observe that at radius zero the number of components is exactly $n$ and as the radius increases components merge (or die). The $1$-cycles show up later in this process. There are two bars that are significantly longer than the others (one in $H_0$ and one in $H_1$). These correspond to the true features of the annulus.  }
\label{fig:ph_example}
\end{figure}


\subsection{The Poisson process}


In this paper, the set of points we use to construct either a \cech or a Rips complex will be generated by a Poisson process $\cP_n$, which can be defined as follows.
Let $X_1,X_2,\ldots$ be an infinite sequence of $\iid$ (independent and identically distributed) random variables in $\R^d$. We will focus on the case where $X_i$ is uniformly distributed on the unit cube ${\mathcal Q}^d= [0,1]^d$. We note, however, that our results hold (with minor adjustments) for any distribution with a compact support and density bounded above and below.
Next, fix $n>0$, take $N \sim \mathrm{Poisson}({n})$, independent of the $X_i$'s, and define
\begin{equation}\label{eq:def_pp}
\cP_n = \set{X_1,X_2,\ldots, X_N}.
\end{equation}
Two properties characterizing the Poisson process $\cP_n$ are:
\begin{enumerate}
\item For every Borel-measurable set $A\subset \R^d$ we have that
$$
	\abs{\cP_n \cap A} \sim \mathrm{Poisson}({n \vol(A\cap {\mathcal Q}^d)}),
$$
where $\abs{\cdot}$ stands for the set cardinality, and $\vol(\cdot)$ is the Lebesgue measure.
\item If $A,B\subset\R^d$ are disjoint sets then $\abs{\cP_n\cap A}$ and $\abs{\cP_n \cap B}$ are independent random variables (this property is known as `spatial independence').
\end{enumerate}
The Poisson process $\cP_n$ is closely related to the fixed-size set $\set{X_1,\ldots,X_n}$. Note that the expected number of points in $\cP_n$ is $\mean{N} = n$. In fact, most results known for one of these processes apply to the other with very minor, or no, changes. This is true for the results presented in this paper as well. However we choose to focus only on $\cP_n$, mainly due to its spatial independence property. 

In the following we  study asymptotic phenomena, when $n\to\infty$. In this context, if $E_n$ is an event that depends on $n$, we say that $E_n$ occurs \emph{with high probability} (w.h.p.) if $\limninf\prob{E_n} = 1$.


\section{Maximally persistent cycles}\label{sec:main}

For the remainder of this paper assume that $d \ge 2$ and $1 \le k \le d-1$ are fixed.
Let $\cP_n$ be the Poisson process defined above, and define
\[
\cU(n,r) := \cU(\cP_n,r),\quad \cC(n,r) := \cC(\cP_n,r),\quad \cR(n,r) := \cR(\cP_n,r).
\] 
Let $\PH_k(n)$ be the $k$-th persistent homology of either of the filtrations for $\cU,\cC,\textrm{ or }\cR$ (it will be clear from the context which filtration we are looking at). Note that from the Nerve Lemma (\ref{lem:nerve}) we have that  $\cU(n,r)\simeq\cC(n,r)$, so we will state the results only for $\cC$ and $\cR$. However, some of the statements we make are easier to prove for the balls in $\cU$ rather than the simplexes in $\cC$, and we shall do so.

For every cycle $\gamma \in \PH_k(n)$ we denote by $\gamma_{birth},\gamma_{death}$ the birth and death times (radii) of $\gamma$, respectively. Commonly (see \cite{Carlsson09,Ghrist08}), the \emph{persistence} of a cycle is measured by 
the length of the corresponding bar in the barcode, namely by the difference $\delta(\gamma) := \gamma_{death}-\gamma_{birth}$. In this paper, however, we choose to define the persistence of $\gamma$ in a multiplicative way as
\begin{equation}\label{eq:pers_multi}
	\pi(\gamma) := \frac{\gamma_{death}}{\gamma_{birth}}.
\end{equation}

There are several reasons for defining the persistence of a cycle this way.
\begin{itemize}
\item This definition is equivalent to saying that we measure the difference in a logarithmic scale. Studying persistent homology in the logarithmic scale is common \cite{chazal2013persistence,buchet_efficient_2015,hudson_mesh_2009,phillips_geometric_2013,sheehy_persistent_2014}.

\item  This definition is scale invariant, which is desirable, since `topological significance' should focus on shape rather than size. 
For example, consider the cycles corresponding to $\gamma_1,\gamma_2$ in Figure \ref{fig:multi}. These two cycles are created by exactly the same configuration of points, just at a different scale. Therefore, we would like to say that these cycles are equally significant. Clearly, $\delta(\gamma_1) > \delta(\gamma_2)$, while $\pi(\gamma_1) = \pi(\gamma_2)$. Thus, our definition works better in this case.

In addition, this scale invariance guarantees that a linear change in the units used to measure the data (e.g.~from inches to cm, or from degrees Celsius to Fahrenheit) will not affect the persistence value.

\item One purpose of using a persistence measure is to differentiate between cycles that capture phenomena underlying the data, and those who are created merely due to chance. To this end, the `physical size' of the cycle is not necessarily the correct measure. Consider, for example, the cycles corresponding to $\gamma_2$ and $\gamma_3$ in Figure \ref{fig:multi}. 
Intuitively, we would like to claim that $\gamma_2$ is more significant than $\gamma_3$, as the former is created by a very `stable' configuration of points, while the latter is created by outliers that clearly tell us nothing about the underlying structure. In this example, taking the `additive' persistence we will have that $\delta(\gamma_2) < \delta(\gamma_3)$, simply because the overall size of the annulus is much smaller than that of the triangle. However, taking multiplicative persistence yields $\pi(\gamma_2) > \pi(\gamma_3)$, which is more consistent with our intuition.

\item Both the \cech and Vietoris--Rips complexes are important in TDA, and the natural relationship between these complexes is a multiplicative one (see \eqref{eq:rips_cech}). Because of this relationship, our results hold for both random \cech and Rips complexes, up to a constant factor (see Section \ref{sec:rips}). 
Indeed, the majority of approximation results for geometric complexes are multiplicative~\cite{sheehy2013linear,chazal2014persistence,dey2015graph}, making multiplicative persistence more relevant to existing stability guarantees. 

\item The argument from Section 5 of this paper suggests that there are many cycles $\gamma$ for which $\gamma_{birth} = o(\gamma_{death})$. In this case, it is hard to differentiate between cycles by looking at $\gamma_{death}-\gamma_{birth} \approx \gamma_{death}$.

\end{itemize}

\begin{figure}[h!]
\centering
  \includegraphics[scale=0.25]{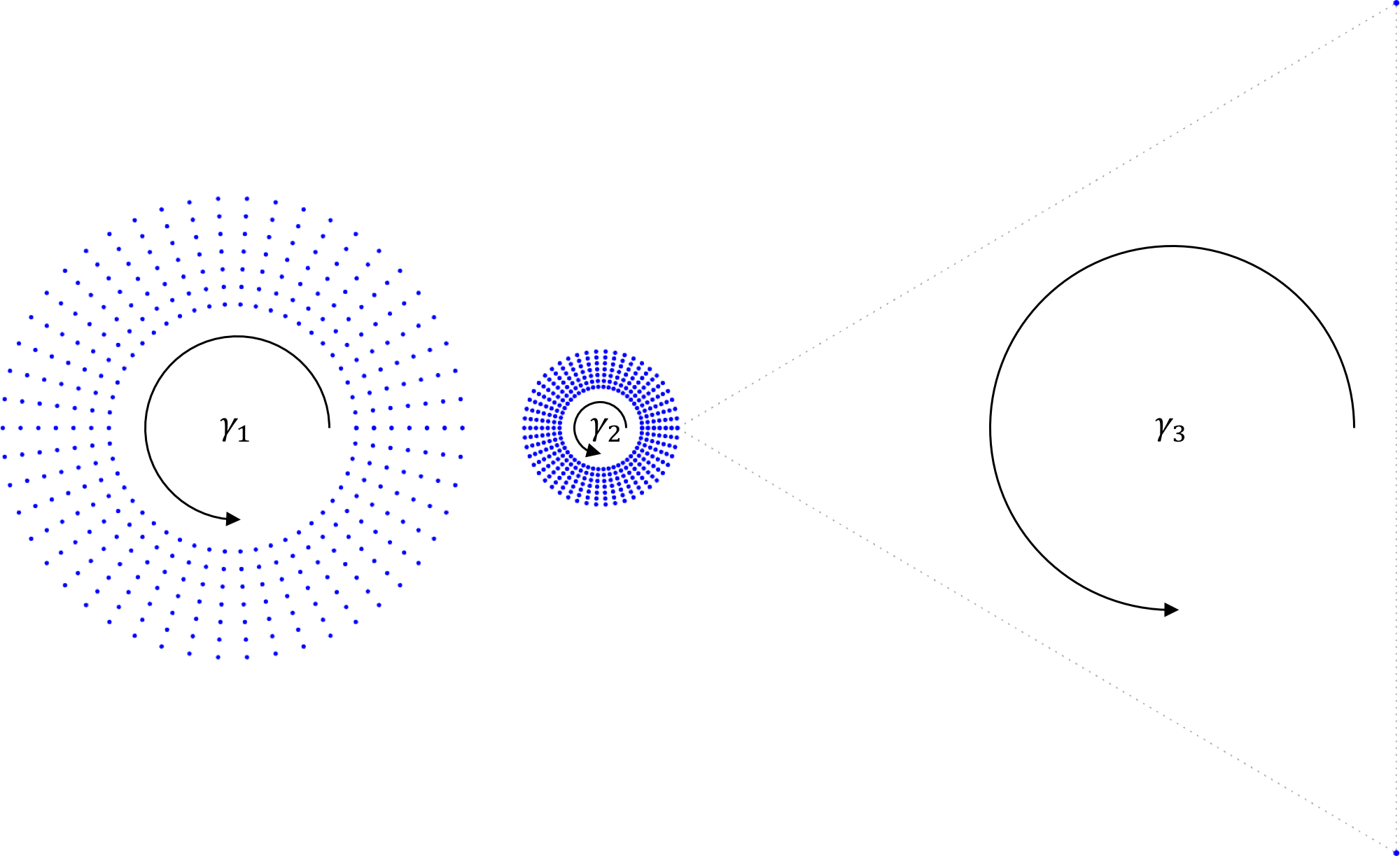}
\caption{Multiplicative persistence as a significance measure. The dataset in this example consists of a few hundred points sampled from two annuli, and two outliers (on the right). We are interested in the $1$-cycles that denoted by $\gamma_1,\gamma_2,\gamma_3$, that correspond to the two annuli and the triangle on the right.}
\label{fig:multi}
\end{figure}

Our main interest is in the maximal persistence over all $k$-cycles, defined as
\begin{equation}\label{eq:def_max_pers}
	\Pi_k(n) := \max_{\gamma \in \PH_k(n)}\pi(\gamma).
\end{equation}
More specifically, we are interested in the asymptotic behavior of $\Pi_k(n)$ as $n\to \infty$. 
The main result in this paper is that $\Pi_k(n)$ scales like the function $\Delta_k(n)$, defined by
\begin{equation}\label{eq:delta_n}
\Delta_k(n) :=  \param{\frac{\log n}{\log \log n}}^{1/k}.
\end{equation}
In particular we have the following theorem.


\begin{thm}\label{thm:main}
For fixed $d \ge 2$, and $1\le k \le d-1$, 
let $\cP_n$ be a Poisson process on the unit cube $[0,1]^d$ defined in \eqref{eq:def_pp}, and let $\PH_k(n)$ be the $k$-th persistent homology of either $\cC,\textrm{or } \cR$.
Then there exist positive constants $A_k, B_k$ such that
\[
	\limninf \prob{A_k \le \frac{\Pi_k(n)}{\Delta_k(n)} \le B_k} = 1.
\]
\end{thm}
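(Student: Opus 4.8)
The upper and lower bounds require completely different arguments, and the plan is to treat them separately (they are carried out in Sections~\ref{sec:upper} and~\ref{sec:lower}, with the topological input in Section~\ref{sec:topo}). First I would reduce to the \cech filtration: by the sandwich \eqref{eq:rips_cech}, passing from $\cC$ to $\cR$ changes each birth and death radius by at most a bounded factor, hence changes $\pi(\gamma)$, and so $\Pi_k(n)$, by at most a bounded factor (this is done in Section~\ref{sec:rips}). So it suffices to produce constants $A_k,B_k>0$ with $\limninf\prob{A_k\le \Pi_k(n)/\Delta_k(n)\le B_k}=1$ for $\cC$.

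\medskip
\noindent\textbf{Upper bound.} The plan is to control the persistence of a cycle by the number of vertices needed to represent it, and then count. The structural input is the ``isoperimetric'' Lemma~\ref{lem:min_comp}: if $\gamma\in\PH_k(n)$ is born at $r=\gamma_{\birth}$ and is carried at scale $r$ by a subcomplex on $m$ vertices, then $\gamma_{\death}\le c_d\,m^{1/k}\,r$, equivalently $\pi(\gamma)\le c_d\,m^{1/k}$. Thus a cycle with $\pi(\gamma)\ge B\,\Delta_k(n)$ is supported on a connected subcomplex of $\cC(\cP_n,r)$ with at least $(B/c_d)^{k}\log n/\log\log n$ vertices. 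I would then split according to the birth scale relative to the thermodynamic threshold $\asymp n^{-1/d}$. If $r$ is below it, every connected component of $\cC(\cP_n,r)$ has $O(\log n)$ vertices w.h.p.\ (subcritical random geometric graph), so the support has $O(\log n)$ vertices; one then runs a first-moment bound over a discretization of $r$, over the at most $C^{m}$ combinatorial types of a connected $m$-vertex configuration (the expected number of connected $m$-subsets at scale $r$ being of order $n^{m}m^{m-2}(cr^d)^{m-1}/m!$ by a spanning-tree/Mecke estimate), and over the position of an empty ``void'' of diameter of order $\gamma_{\death}$ that a class surviving to scale $\gamma_{\death}$ must leave uncovered, contributing a factor $\exp(-c'\,n\,\gamma_{\death}^{d})$. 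Since $\pi(\gamma)\ge B\Delta_k(n)$ makes this last factor super-polynomially small while forcing $m$ up to order $\log n/\log\log n$, the expected number of such cycles tends to $0$ for $B=B_k$ large. If instead $r$ is above the thermodynamic threshold, then $\gamma_{\birth}\gtrsim n^{-1/d}$ while every cycle dies by the coverage/vanishing threshold $\lesssim (\log n/n)^{1/d}$, so $\pi(\gamma)\lesssim(\log n)^{1/d}=o(\Delta_k(n))$ --- this is where $k\le d-1$ is used --- and such a cycle is never ``bad''.

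\medskip
\noindent\textbf{Lower bound.} Here the plan is to construct a persistent cycle. Fix a sufficiently small $\eta=\eta(d,k)>0$, set $m=\lfloor\eta\,\log n/\log\log n\rfloor$, and partition $[0,1]^d$ into $\asymp n/m$ congruent sub-cubes of side $\asymp(m/n)^{1/d}$. Inside one sub-cube I would prescribe an ``ideal'' configuration of order $m$ points spread evenly over a thin $k$-dimensional spherical shell of radius $\rho\asymp(m/n)^{1/d}$ with nearest-neighbour spacing $r_0\asymp\rho\,m^{-1/k}$; at scale $\asymp r_0$ this configuration is connected and, via Lemma~\ref{lem:nerve}, homotopy equivalent to $S^k$, and because the central region of diameter $\asymp\rho$ contains no point, its $k$-cycle cannot die before radius $\asymp\rho$, so it has persistence $\asymp\rho/r_0\asymp m^{1/k}\asymp\Delta_k(n)$. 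A stability argument would show that any realization of $\cP_n$ placing at least one point in each of $m$ prescribed cells of volume $\asymp r_0^{d}$ and no point in the void still carries such a cycle up to universal constants. Each of these cells is a fraction $\asymp(r_0/\rho)^{d}=m^{-d/k}$ of its sub-cube, whose expected population is $\asymp m$, so the probability that a given sub-cube realizes the configuration is at least of order $(n\,r_0^{d})^{m}\exp(-c\,n\rho^{d})=\exp(-\Theta(m\log m))$. With $\asymp n/m$ sub-cubes and $m\log m\asymp\eta\log n$, the expected number of successful sub-cubes tends to infinity; by spatial independence of $\cP_n$ the sub-cube indicators are independent, so w.h.p.\ at least one sub-cube succeeds and produces a $k$-cycle with $\pi(\gamma)\ge A_k\Delta_k(n)$.

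\medskip
\noindent\textbf{Where the difficulty lies.} I expect the hard part to be the isoperimetric Lemma~\ref{lem:min_comp} itself: turning ``few vertices'' into ``dies soon'' is a genuinely topological statement, and most of the work sits there. A secondary difficulty is making the first-moment estimate uniform over the continuum of birth scales, which is what forces the case split above together with the appeal to component-size and coverage thresholds. On the lower-bound side the delicate point is the stability claim --- verifying that an arbitrary realization respecting only the cell-and-void constraints still supports a cycle whose birth and death radii match the ideal ones up to constants.
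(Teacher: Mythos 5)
Your proposal is correct and follows the same skeleton as the paper: the isoperimetric Lemma~\ref{lem:min_comp} converting persistence into a lower bound on the size of the supporting component, a first-moment count of connected components at small radii, the contractibility/coverage threshold to dispose of late-born cycles, a planted $k$-dimensional shell for the lower bound, and the interleaving \eqref{eq:rips_cech} for Rips. The one place you genuinely diverge is the intermediate radius regime of the upper bound. You cut at the thermodynamic threshold $nr^d\asymp 1$, where components only have $O(\log n)$ vertices, and so you must recover the missing $(\log\log n)^{1/k}$ factor by augmenting the first moment with the probability $\exp(-c\,n\gamma_{\mathrm{death}}^d)$ of an empty ball of radius $\gamma_{\mathrm{death}}$ (which a class surviving to that scale indeed forces, since a covered convex hull would fill the cycle). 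This works, but it costs you a discretization over birth scales and void positions and a factorization argument. The paper instead places the cut at $nr^d\le C_2(\log n)^{-\alpha}$ with $0<\alpha<d/k-1$ (Lemma~\ref{lem:max_comp}): in that slightly subcritical regime the components already have at most $\alpha^{-1}\log n/\log\log n$ vertices, so Lemma~\ref{lem:min_comp} alone finishes the early-born case, while the late-born case follows exactly as in your ``above threshold'' argument with $\gamma_{\mathrm{birth}}\gtrsim (n(\log n)^{\alpha})^{-1/d}$. Your void factor is therefore correct but unnecessary once the cutoff is chosen well. On the lower bound, your spherical shell with one point per cell of side $\asymp\rho m^{-1/k}$ and an empty interior is the same construction as the paper's thickened boundary of a $(k+1)$-cube (Lemma~\ref{lem:long_cycle_1}), the cube being chosen only because the cell decomposition and the homotopy-equivalence/stability step are easier to write down explicitly; your parameter count ($\exp(-\Theta(m\log m))$ success probability against $\asymp n/m$ independent boxes) matches the paper's computation in Lemma~\ref{lem:long_cycle_exist}. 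You also correctly locate the main difficulty in Lemma~\ref{lem:min_comp}.
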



{\bf Remarks:} 
\begin{enumerate}

\item The constants $A_k$ and $B_k$ depend on $k$ (the homology degree), $d$ (the ambient dimension), and on whether we consider the \cech or the Rips complex. 
We conjecture that a law of large numbers holds, namely that $\Pi_k(n)/\Delta_k(n) \to C_k$ for some $A_k \le C_k \le B_k$. For some evidence for this conjecture, see the experimental results in Section \ref{sec:experimental}.
In the following sections we will prove Theorem \ref{thm:main}.

\item The additive persistence $\delta(\gamma)$ can be bounded naively by the result on the contractibility of the \cech complex in \cite{geometric}. More concretely, Theorem 6.1 states that if $r \ge c \left(\frac{\log n}{n}\right)^{1/d} $ then the \cech complex is contractible (w.h.p.). This implies that for every cycle $\gamma$ we have $\delta(\gamma) \le \gamma_\death \le c \left(\frac{\log n}{n}\right)^{1/d}$. Similar statements can be made about $\PH_0$ using the connectivity radius in \cite{appel_connectivity_2002,penrose_longest_1997} (which is of the same $(\log n/n)^{1/d}$ scale). However, these are only crude upper bounds on the additive persistence, 
that do not differentiate between the different cycles in persistent homology, or even between different degrees of homology (note that these bounds do not depend on $k$).

\item The study in \cite{geometric} suggests the following upper bound for $\Pi_k(n)$. As mentioned before, we know that $\gamma_{\death} \le c \param{\frac{\log n}{n}}^{1/d}$ for all $\gamma$. In addition, the analysis in \cite{geometric} shows that if $n^{k+1}r^{dk} \to 0$ then $H_k(\cC(n,r)) = 0$, which implies that $\gamma_{\birth} \ge c' n^{-\frac{k+2}{d(k+1)}}$ for some $c'>0$.
Therefore, we have that $\pi(\gamma) = O\param{{(\log n)^{1/d}  n^{\frac{1}{d(k+1)}}}}$. However, as we shall see later, this is a very crude upper bound.

\end{enumerate}


\section{Proof - Upper Bound}\label{sec:upper}


For this section and the next one,  consider the \cech complex only. We want to prove the upper bound in Theorem \ref{thm:main}. That is, we need to show that there exists a constant $B_k>0$ depending only on $k$ and $d$,
so that with high probability 
$$ \Pi_k(n) \le B_k  \Delta_k(n) = B_k\left( \frac{\log n }{\log \log n} \right)^{1/k}.$$

The main idea in proving the upper bound in Theorem \ref{thm:main} is to show that large cycles require the formation of a large connected component in $\cC(n,r)$ at a very early stage (small radius $r$). To this end we will provide two bounds: (1) a lower bound for the size of the connected component supporting a large cycle (Lemma \ref{lem:min_comp}), and (2) an upper bound for the size of connected components in $\cC(n,r)$ for small values of $r$ (Lemma \ref{lem:max_comp}). 


\begin{restatable}{lem}{lemmincomp}\label{lem:min_comp}
Let $\gamma\in \PH_k(n)$, with $\gamma_{birth} = r$ and $\pi(\gamma) = p$. Then there exists a constant $C_1$ such that $\cC(n,r)$ contains a connected component with at least $m = C_1 p^k$ vertices. The constant $C_1$  depends on $k,d$ only.
\end{restatable}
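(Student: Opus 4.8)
The plan is to show that a $k$-cycle that is born at radius $r$ and persists (multiplicatively) for a factor of $p$ must be "fat" in a quantitative sense: at its death time $pr$, the cycle must still be nontrivial, which forces the union of balls $\cU(\cP_n, pr)$ to contain a topological feature that cannot be filled by a small number of balls. First I would work on the balls side via the Nerve Lemma, so that $H_k(\cC(n,s)) \cong H_k(\cU(n,s))$ throughout. Let $\gamma$ be the cycle, born at $r$, dying at $pr$. Since $\gamma$ is nontrivial in $H_k(\cU(n,s))$ for all $s \in [r, pr)$ but the inclusion $\cU(n,r)\hookrightarrow \cU(n,pr')$ kills it only at $s = pr$, there is a representative cycle supported on a connected component $\cC'$ of $\cC(n,r)$. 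Call its vertex set $V$, $|V| = m$. The goal is the bound $m \ge C_1 p^k$.

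The key geometric step: I want to argue that if $\cC'$ has only $m$ vertices, then the union of the $m$ balls of radius $r$, namely $\cU(V, r)$, is contained in a "thin neighborhood" of a set of total $k$-dimensional measure (or diameter) that is too small to support a $k$-cycle of "width" $p$. Concretely, a connected component on $m$ vertices in $\cC(n,r)$ has diameter at most $2mr$ (a spanning tree argument: each edge has length $\le 2r$). So $\cU(V,r)$ lies in a ball of radius $O(mr)$. Now here is the point where multiplicative persistence enters: a $k$-cycle born at radius $r$ and still alive at radius $pr$ requires, by a Mayer--Vietoris / filling argument, that to fill it one needs to "thicken" by a factor $p$, and intuitively the cycle must enclose a region whose $k$-dimensional "girth" is of order $pr$. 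Meanwhile, the $(k+1)$-fold (or $k+1$-vertex) chains available to fill it at radius $s < pr$ are supported on $\cU(V,s)$, which is a $\le m$-ball union of controlled total volume. The precise mechanism I would use is an isoperimetric/packing inequality: a nontrivial $k$-cycle in a union of $m$ balls of radius $s$, that dies only when $s$ reaches $pr$, must have "$k$-dimensional size" comparable to $(pr)^k$ but is built from $O(m)$ balls each of "$k$-size" $O(r^k)$ at birth, forcing $m \gtrsim (pr/r)^k = p^k$. I would make this rigorous by choosing the right invariant to track — a candidate is the volume/diameter of the smallest $(k+1)$-chain filling $\gamma$, or a direct covering-number estimate on the component.

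The cleanest route I would attempt: first show that at death radius $pr$, some $(k+1)$-simplex (or rather a $(k+1)$-chain) whose vertices lie in $V$ bounds $\gamma$, so $V$ itself must contain the vertex set of a $(k+1)$-dimensional "disk"; then show that a $(k+1)$-simplex on vertices within a diameter-$D$ set only appears in $\cC(\cdot, s)$ once $s \gtrsim D$ (actually the relevant bound is that a $k+1$ point set has all balls of radius $s$ intersecting only if its diameter is $\lesssim s$), while the cycle being alive at all $s < pr$ means no such filling disk has all its constituent simplices present before $pr$ — combining these with a counting of how many simplices on $m$ vertices can be "small" gives the bound. Either way, one reduces to: the component supporting $\gamma$ has diameter $\gtrsim pr$ (else $\gamma$ would die earlier — a short diameter means the whole component is contractible in $\cU$ at radius $\sim$ diameter), hence needs $\gtrsim p$ vertices in degree $k=1$; and an inductive/product argument across the $k$ "independent directions" of a $k$-cycle boosts this to $p^k$.

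\textbf{Main obstacle.} The hard part will be the last step — turning "the supporting component has diameter at least of order $pr$" into the sharp exponent $p^k$ rather than just $p$. Controlling diameter gives a linear-in-$p$ vertex count; extracting the $k$-th power requires genuinely using that $\gamma$ is a $k$-dimensional cycle, i.e.\ that a persistent $k$-cycle is "wide in $k$ independent directions," so that the supporting component is not merely long but spread out in a way that costs $\sim p$ vertices per dimension. Making this precise — presumably via an isoperimetric inequality relating the $k$-volume of a filling chain to the number of balls, or via a clever projection/slicing argument reducing degree $k$ to $k$ nested instances of the degree-$1$ estimate — is where the real work lies, and I expect the proof to isolate exactly this as its novel "isoperimetric-type" lemma, as the introduction advertises.
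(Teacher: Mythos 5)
Your high-level plan is the right one and matches the paper's strategy: pass to the union of balls via the Nerve Lemma, observe that $\gamma$ lives on a single component with $m$ vertices, and then play a ``$k$-volume of the cycle'' against a ``filling'' argument to extract the exponent $k$. But as you yourself flag, the decisive step is left unresolved, and there are two concrete ingredients you would need to supply. First, the precise isoperimetric input is the Federer--Fleming inequality (Theorem \ref{thm:isoper}): a singular $k$-cycle of $k$-volume $V$ has filling radius at most $C V^{1/k}$. Combined with the elementary observation that a cycle in $\cU(\cP,r)$ with filling radius $R$ dies by radius $R+r$ (Lemma \ref{lem:frtp}), this converts an \emph{upper} bound on volume into an upper bound on the death time, hence on $p$. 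Note the logical direction: you do not need to show the cycle is ``wide in $k$ independent directions'' or run an induction over directions --- you only need $\vol_k(\gamma)\le C m r^k$ and then $pr \le \gamma_{death} \le C'(m r^k)^{1/k} + 2r$, which rearranges to $m \ge C_1 p^k$ in one line. Your worry about boosting a linear diameter bound to $p^k$ dissolves once the inequality is quoted with the $V^{1/k}$ exponent.

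Second, the volume bound $\vol_k \le C m r^k$ is not automatic and your phrase ``built from $O(m)$ balls each of $k$-size $O(r^k)$'' hides the real issue: a $k$-cycle on $m$ vertices can a priori contain up to $\binom{m}{k+1}\sim m^{k+1}$ simplices, each of volume up to $(2r)^k$, which would only give $\vol_k \lesssim m^{k+1}r^k$ and hence the much weaker $m\gtrsim p^{k^2/(k+1)}$. The paper's fix (Lemmas \ref{lem:bound_inter_cycle} and \ref{lem:v2v}) is to pass to an $\varepsilon$-net $S$ of the vertex set and replace $\gamma$ by a \emph{homologous} cycle supported on $\cC(S,2r)$; the packing condition on $S$ bounds the degree of each vertex by a sphere-packing constant, so the number of $k$-simplices is linear in $m$, at the harmless cost of doubling the radius. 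Without some such device your volume estimate does not close, so the proposal as written has a genuine gap at exactly the step that carries the exponent.
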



The proof for this lemma requires more working knowledge in algebraic topology than the rest of this paper, and we defer it to Section \ref{sec:topo}. At this point, we would like to suggest an intuitive explanation. Suppose that $\cC(n,r)$ contains a $k$-cycle such that all the points generating it lie on a $k$-dimensional sphere of radius $R$, and such that there are no points of $\cP_n$ inside the sphere. In that case the death time of the cycle will be $R$ and then $\pi(\gamma) = p \ge R/r $. The minimum number of balls of radius $r$ required to cover a $k$-dimensional sphere of radius $R$ is known as the ``covering number" and is proportional to $(R/r)^k = p^k$. The cycle created is then a part of a connected component of $\cC(n,r)$ containing at least $C\times p^k$ vertices. Intuitively, creating a cycle with the same birth and death times in any other way (i.e.~not necessarily around a sphere) will require coverage of an area larger than the $k$-dimensional sphere, and therefore larger connected components. To make this statement  precise,  in Section \ref{sec:topo} we present an  isoperimetric-type inequality for $k$-cycles. Note that this statement is completely deterministic (i.e.~non-random).

The following lemma bounds the number of vertices in a connected component of the \cech complex $\cC(n,r)$, for small values of $r$.


\begin{lem} \label{lem:max_comp}
Let $\alpha > 0$ be fixed. There exists a constant $C_2 > 0$ depending only on $\alpha$ and $d$ such that if
$$nr^d \le \frac{C_2}{(\log n)^{\alpha}}$$
 and $$m \ge \alpha^{-1}\frac{\log n}{\log\log n},$$ then
with high probability  $\cC(n,r)$ has no connected components with more than $m$ vertices.
\end{lem}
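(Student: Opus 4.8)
The plan is a first--moment argument carried out on the random geometric graph underlying the \cech complex. First I record that the $1$-skeleton of $\cC(n,r)$ is exactly the geometric graph $G(\cP_n,2r)$, in which $x_i$ and $x_j$ are adjacent iff $\norm{x_i-x_j}\le 2r$ (equivalently $B_r(x_i)\cap B_r(x_j)\ne\emptyset$), and the connected components of the complex coincide with those of this graph. So it suffices to show that, with high probability, $G(\cP_n,2r)$ has no component on more than $m$ vertices. For this I pass to \emph{subtrees}: if some component has at least $m+1$ vertices, then $G(\cP_n,2r)$ contains a subgraph on exactly $m+1$ vertices that is a tree; hence it is enough to prove that the expected number $\E[W]$ of $(m+1)$-vertex subtrees of $G(\cP_n,2r)$ tends to $0$, and then invoke Markov's inequality $\prob{W\ge 1}\le\E[W]$.

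To bound $\E[W]$ I decompose a subtree into the choice of an unordered $(m+1)$-point subset of $\cP_n$ together with a labelled tree structure on it all of whose edges are present in $G$. Using the formula for the $(m+1)$st factorial moment measure of $\cP_n$ (which is $n^{m+1}$ times Lebesgue measure on $([0,1]^d)^{m+1}$), then Cayley's formula (there are exactly $(m+1)^{m-1}$ labelled trees on $m+1$ vertices), and finally the standard volume estimate — integrate the vertices in a breadth--first traversal of the tree, so that each of the $m$ edges forces the next vertex into a ball of radius $2r$ and contributes a factor $\od(2r)^d$ — yields
\[
\E[W]\ \le\ \frac{n^{m+1}}{(m+1)!}\,(m+1)^{m-1}\,\bigl(\od 2^d r^d\bigr)^{m}.
\]
Applying $(m+1)!\ge ((m+1)/e)^{m+1}$ collapses this to $\E[W]\le \dfrac{ne}{(m+1)^{2}}\,\bigl(e\,\od 2^d\,nr^d\bigr)^{m}$.

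Now I insert the two hypotheses. Write $C_3:=\od 2^d$ and choose $C_2:=1/(eC_3)$; then $nr^d\le C_2/(\log n)^\alpha$ gives $eC_3 nr^d\le (\log n)^{-\alpha}$, so $\E[W]\le \frac{ne}{(m+1)^2}(\log n)^{-\alpha m}$. Since $m\ge\alpha^{-1}\log n/\log\log n$ we have $\alpha m\log\log n\ge\log n$, that is $(\log n)^{\alpha m}\ge n$, and therefore $\E[W]\le \frac{e}{(m+1)^2}\to 0$ as $n\to\infty$ (because $m\to\infty$). Hence $\prob{W\ge 1}\to 0$, which is the claim; note that $C_2$ depends only on $d$, in particular only on $\alpha$ and $d$.

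The argument is mostly routine bookkeeping, and the single delicate point is the choice of the constant. The factor $(eC_3 nr^d)^m=(eC_3C_2)^m(\log n)^{-\alpha m}$ must be controlled \emph{uniformly in} $m$ over all admissible (possibly very large) values; since $(\log n)^{\alpha m}\ge n$ exactly cancels the $\log n$ hidden in the prefactor $n$, any growth of $(eC_3C_2)^m$ would be fatal, so one genuinely needs $eC_3C_2\le 1$, i.e. $C_2\le 1/(e\od 2^d)$, rather than merely $eC_3C_2/(\log n)^\alpha<1$ for large $n$. With $C_2$ at or below this threshold the calibration of the bound against the hypothesis $m\ge\alpha^{-1}\log n/\log\log n$ is exact, which is exactly what one expects at a sharp first--moment threshold.
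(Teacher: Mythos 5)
Your proposal is correct and follows essentially the same route as the paper: a first-moment bound on spanning trees via Palm/Mecke theory, Cayley's formula $(m+1)^{m-1}$, the $(\od 2^d r^d)$-per-edge volume estimate, Stirling, and Markov's inequality. The only (cosmetic) difference is that you count trees on $m+1$ vertices where the paper counts connected $m$-vertex subsets, so your edge factor has exponent $m$ rather than $m-1$; this lets you take $C_2=(e\od 2^d)^{-1}$, whereas the paper absorbs the resulting off-by-one in the exponent by inserting an extra factor of $\tfrac12$ into $C_2$.
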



\begin{proof}[Proof of Lemma \ref{lem:max_comp}]
Let $N_m(r)$ be the number of subsets of $\cP_n$ with  $m$ vertices, that are connected in $\cC(n,r)$.
We can write $N_m(r)$ as 
\[
	\sum_{\cY\subset \cP_n} \ind\set{\cC(\cY,r) \textrm{ is connected}},
\] 
where the sum is over all sets $\cY$ of $m$ vertices. We will show that choosing $r$ and $m$ as the lemma states, we have $\prob{N_m(r) > 0} \to 0$ which implies the statement of the lemma.

By Palm theory (see for example, Theorem 1.6 of \cite{Penrose}) we have that
\[
	\mean{N_m(r)}  = \frac{n^m}{m!} \prob{ \cC(\{X_1,\ldots, X_m\},r) \textrm{ is connected}},
	\]
where $X_i \sim U([0,1]^d)$ are $\iid$ variables. If $\cC(\{X_1,\ldots, X_m\},r)$  is connected, then the underlying graph must contain a subgraph isomorphic to a tree on $m$ vertices.
Suppose that $\Gamma$ is a labelled tree on the vertices $\set{1,\ldots,m}$. Assuming that  vertex $1$ is the root, for $2\le i\le m$ let $\mathrm{par}(i)$ be the parent of vertex $i$ in the tree. Suppose also that the vertices are ordered so that $\mathrm{par}(i) < i$. If $\cC(\{X_1,\ldots, X_m\},r)$ contains  $\Gamma$ then every $X_i$ must be connected to $X_{\mathrm{par}(i)}$ which implies that $X_i \in B_{2r}(X_{\mathrm{par}(i)})$. Therefore,
\[\begin{split}
	\prob{ \cC(\{X_1,\ldots, X_m\},r) \textrm{ contains $\Gamma$}} &\le \prob{X_i \in B_{2r}(X_{\mathrm{par}(i)}),\ \forall 2\le i \le m}\\
	&\le \int_{[0,1]^d}\int_{B_{2r}(x_{\mathrm{par}(2))}}\cdots \int_{B_{2r}(x_{\mathrm{par}(m)})} dx_m\cdots dx_1\\
	&= (\omega_d 2^dr^d)^{m-1}.
\end{split}
\]
The second inequality is due to the effect of the boundary of cube.
The same bound holds for any ordering of the vertices.
It is known that the total number of labelled trees on $m$ vertices is $m^{m-2}$, and therefore we have
\[
	\mean{N_m(r)}  \le  \frac{n^m}{m!} m^{m-2}(\omega_d 2^d r^d)^{(m-1)}.
\]
From Stirling's approximation we have that $m! \ge  (m/e)^m$, and therefore,
\[
	\mean{N_m(r)}  \le  n^m e^m m^{-2}(\omega_d 2^dr^d)^{(m-1)} =  e \frac{n}{m^2}(e\omega_d 2^d nr^d)^{m-1}.
\]
Defining $C_2 = \frac{1}{2}(e\omega_d 2^d)^{-1}$,  if $ nr^d \le C_2(\log n)^{-\alpha}$ then 
\[
	\mean{N_m(r)} \le e\frac{n}{m^2}e^{- (m-1)(\alpha\log\log n+ \log 2)}.
\]
If $m \ge \alpha^{-1}\frac{\log n}{\log \log n} $ we therefore have (for $n$ large enough):
\[
	\mean{N_m(r)} 	\le \frac{e}{m^2},
\]
and $e / m^2 \to 0$ as $n \to \infty$.

Finally, by Markov's inequality, $\prob{N_m(r) >0} \le\mean{N_m(r)}$, and  therefore we have that $\prob{N_m(r) > 0} \to 0$ which completes the proof.
\end{proof}


With these two lemmas, we can prove the upper bound in Theorem \ref{thm:main}.


\begin{proof}[Proof of Theorem \ref{thm:main} - upper bound]

Fix a value $\alpha > 0$, and  consider two kinds of $k$-cycles: The \emph{early-born} cycles, are the ones created at a radius $r$ satisfying $nr^d \le C_2(\log n)^{-\alpha}$ (see Lemma \ref{lem:max_comp}). The \emph{late-born} cycles are all the rest.

If $\gamma\in \PH_k(n)$ is an early-born cycle, then according to Lemma \ref{lem:max_comp} it is part of a connected component with $m < \alpha^{-1}\frac{\log n}{\log\log n}$ vertices. If $\pi(\gamma) = p$, then from Lemma \ref{lem:min_comp} we  have that $C_1 p^k \le m$. Combining these two statements we have that with high probability,
\[
	\pi(\gamma) \le (C_1\alpha)^{-1/k}\param{\frac{\log n}{\log\log n}}^{1/k}.
\]
Therefore $\pi(\gamma)\le B_k \Delta_k(n)$, with $B_k = (C_1\alpha)^{-1/k}$.

Suppose now that $\gamma\in \PH_k(n)$ is a late-born cycle. This implies that $\gamma_{birth} = r$ where $nr^d  > (\log n)^{-\alpha}$, or in other words that  $\gamma_{birth} > (\frac{1}{n(\log n)^{\alpha}})^{1/d}$.
Next, in \cite{geometric} it is shown (see Theorem 6.1) that there exists $C>0$ such that if $r\ge C \left( \frac{ \log n }{n} \right)^{1/d}$ then with high probability $\cC(n,r)$ is contractible  (i.e.~can be ``shrunk" to a point, and therefore has no nontrivial cycles). In particular, this implies that $\gamma_{death} \le C \left( \frac{ \log n }{n} \right)^{1/d}$ for every cycle $\gamma$.
Thus, for late-born cycles $\gamma$
$$ \pi(\gamma) < C(\log n)^{(1+\alpha)/d} .$$
Thus, for any $\alpha < d/k - 1$, we have that with high probability the persistence of late-born cycles $\gamma$ satisfies
$$\pi(\gamma) = o \left( \left( \frac{\log n }{\log \log n} \right)^{1/k} \right).$$
\end{proof}


\section{Proof - Lower Bound} \label{sec:lower}


In this section we prove the lower bound part of Theorem \ref{thm:main} for the \cech complex $\cC(n,r)$, namely that there exists $A_k>0$ (depending on $k$ and $d$), such that with high probability, 
\[
	\Pi_k(n) \ge A_k\Delta_k(n) = A_k\param{\frac{\log n}{\log\log n}}^{1/k}.
\]
In other words, we need to show that with a high probability there exists $\gamma\in \PH_k(n)$ with $\pi(\gamma) \ge A_k\Delta_k(n)$. 

To show that, we take the unit cube $Q=[0,1]^d$ and divide it into small cubes of side $2L$. The number of small cubes we can fit in $Q$ denoted by $M$ satisfies $M\ge C_3 L^{-d}$ for some $C_3 > 0$. Denoting the small cubes by $Q_1,\ldots, Q_M$, we want to show that at least one of these cubes contains a large cycle. Let $Q_i$ be one of these cubes, and think of it as centered at the origin, so that $Q_i = [-L,L]^d$. Let $\ell < L/4$, 
denote $\hat L = \floor{L/\ell}\times \ell$, and define
\begin{align*}
	S_i^{(1)} &= [-\hat L /2 ,  \hat L/2]^{k+1} \times [-\ell/2, \ell/2]^{d-k-1} \\
	S_i^{(2)} &= [- \hat L/2 +\ell,  \hat L/2 -\ell]^{k+1} \times [-\ell/2, \ell/2]^{d-k-1} , \\
	S_i &= S_i^{(1)} \backslash S_i^{(2)}.
	\end{align*}
In other words, $S_i$ is a ``thickened" version of the boundary of a $k+1$ dimensional cube of side $\hat L\approx L$ (see Figure \ref{fig:scheme}). 

We will show that if the balls of radius $r$ around $\cP_n$ cover $S_i$ but leave most of $Q_i$ empty then $\cC(n,r)$ would have a $k$-dimensional cycle. Choosing $L$ and $\ell$ properly we can make sure that this cycle has the desirable persistence. More specifically, take $S_i$ and split it into $m$ cubes of side $\ell$, denoted by $S_{i,1},S_{i,2},\ldots, S_{i,m}$ (see Figure \ref{fig:scheme}). The number of boxes $m$ is almost proportional to the ratio of the volumes of $S_i$ and the $S_{i,j}$-s,  and therefore $m \le C_4 (L/\ell)^k$ for some $C_4>0$.
The following lemma uses the process $\cP_n$ but is in fact non-random, and provides a lower-bound to the persistence of the cycles we are looking for.


\begin{figure}[htbp]
\includegraphics[scale=0.35]{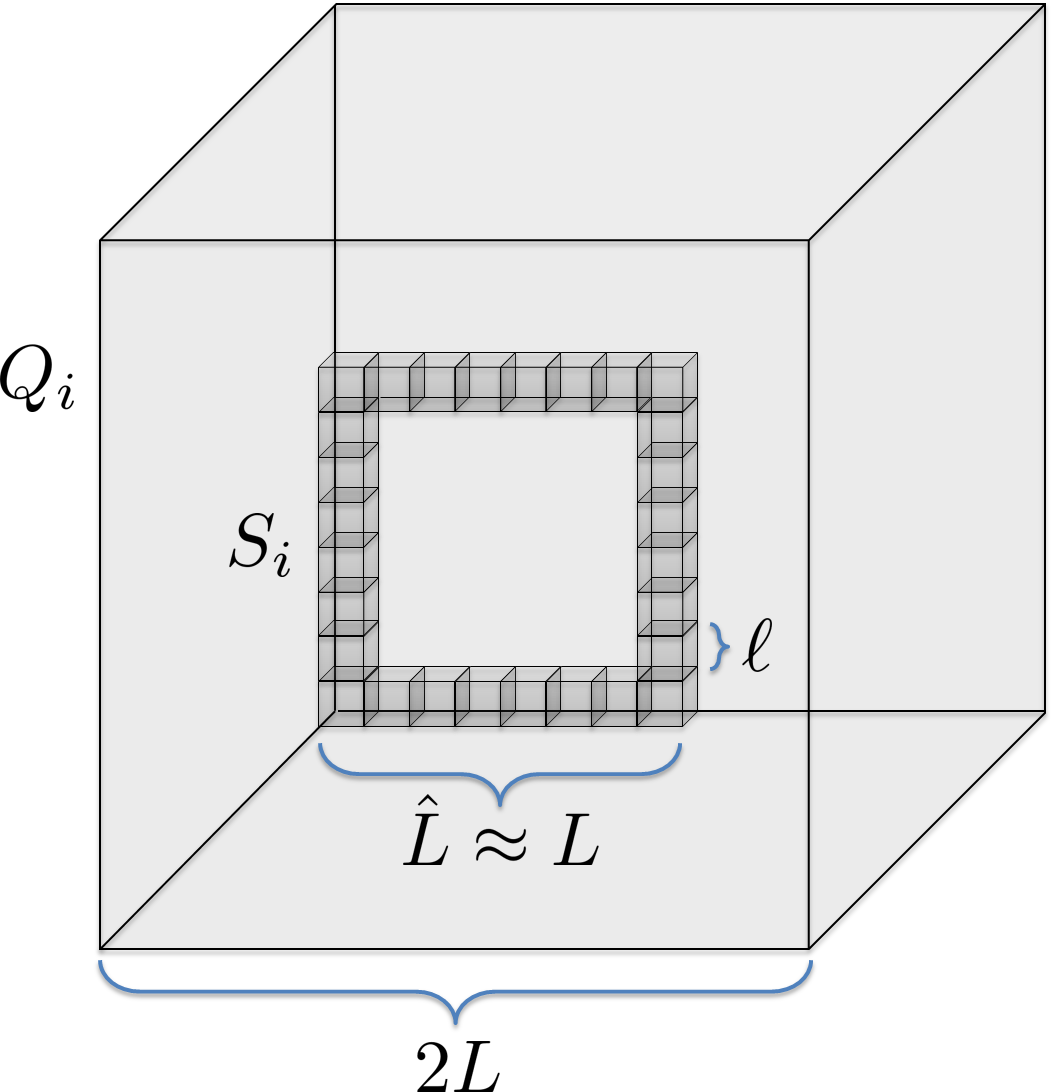}
\begin{center}
\caption{The construction we are examining to find a maximal cycle, for $d=3$ and $k=1$. $Q_i$ is the big box of side $2L$, and $S_i$ is construction made of small boxes in the middle of it, which is homotopy equivalent to a circle.}
\label{fig:scheme}
\end{center}
\end{figure}


\begin{restatable}{lem}{lemlongcyc}
\label{lem:long_cycle_1}
Suppose that for every $1\le j \le m$ we have $\abs{S_{i,j} \cap \cP_n} = 1$, and $\abs{Q_i\cap \cP_n} = m$. Then there exists $\gamma \in \PH_k(n)$ with $\pi(\gamma) \ge \frac{1}{4\sqrt d}\times\frac{L}{\ell}$.
\end{restatable}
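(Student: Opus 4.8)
The plan is to exhibit an explicit nontrivial class $\gamma\in\PH_k(n)$ and to bound its birth and death times separately, so that the ratio $\gamma_{death}/\gamma_{birth}$ is at least $\tfrac{1}{4\sqrt d}(L/\ell)$. Under the hypothesis, each small box $S_{i,j}$ (of side $\ell$) contains exactly one point of $\cP_n$, and these are the only points of $\cP_n$ inside $Q_i$. First I would estimate the birth time. Because $S_i$ is built out of $m$ boxes of side $\ell$ arranged so that consecutive boxes share a face, two points in adjacent boxes are at distance at most $\mathrm{diam}$ of the union of two such boxes, which is at most $\sqrt{d}\,(2\ell)$ -- actually $2\sqrt d\,\ell$ suffices, but a cleaner bound is that any two points in face-adjacent $\ell$-boxes lie within distance $2\sqrt d\,\ell$, so balls of radius $r=\sqrt d\,\ell$ around them already intersect. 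Hence at radius $r=\sqrt d\,\ell$ the points of $\cP_n\cap S_i$ form a connected subcomplex of $\cC(n,r)$ that is homotopy equivalent (via the Nerve Lemma, Lemma \ref{lem:nerve}) to the union of balls around them, which deformation-retracts onto $S_i$, itself homotopy equivalent to a $k$-sphere $S^k$. This produces a class $\gamma$ with $\gamma_{birth}\le \sqrt d\,\ell$; one must check $\gamma$ is genuinely nontrivial in $H_k$ at that radius, which follows because the union of the small balls stays inside a small neighborhood of $S_i$ and so cannot bound in the ambient complex as long as the center of $Q_i$ is uncovered.

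Next I would bound the death time from below. The class $\gamma$ can only die once the ``hole'' enclosed by $S_i$ -- the central region of $Q_i$ of side roughly $\hat L$ -- gets filled in by the union of balls. Since there are \emph{no} points of $\cP_n$ in $Q_i$ other than the $m$ points in $\bigcup_j S_{i,j}\subset S_i$, the union of balls $\cU(\cP_n\cap Q_i, r)$ cannot cover the center of $Q_i$ until $r$ is at least the distance from the center of $Q_i$ to $S_i$, which is on the order of $\hat L/2 - \ell \ge \hat L/4$; and points of $\cP_n$ outside $Q_i$ are at distance at least $L-\hat L/2\ge L/4$ from the center as well. The cleanest route is: as long as $r < \hat L/4$ (say), the center point of the inner cube is not in $\cU(\cP_n,r)$, hence $\cU(\cP_n,r)$ restricted to a neighborhood of $Q_i$ retracts onto something with nontrivial $H_k$ detecting that uncovered region, so $\gamma$ survives; thus $\gamma_{death}\ge \hat L/4$. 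Combining, $\pi(\gamma)=\gamma_{death}/\gamma_{birth}\ge (\hat L/4)/(\sqrt d\,\ell)$, and since $\hat L = \floor{L/\ell}\ell \ge L/2$ (using $\ell< L/4$), this gives $\pi(\gamma)\ge \tfrac{L}{8\sqrt d\,\ell}$; a slightly sharper accounting of the constants (or using $r=\sqrt d\,\ell$ for birth but a tighter diameter bound, or $\hat L\ge L - \ell\ge 3L/4$) recovers the stated $\tfrac{1}{4\sqrt d}\cdot\tfrac{L}{\ell}$.

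The main obstacle I anticipate is making the topological claims rigorous rather than just intuitive: (i) showing that at radius $\gamma_{birth}\le\sqrt d\,\ell$ the subcomplex on $\cP_n\cap S_i$ really is homotopy equivalent to $S^k$ (one needs that the balls do not accidentally fill the inner hole or that the ``thickened boundary of a $(k+1)$-cube'' shape genuinely has the homotopy type of $S^k$ -- this is where the assumption $\ell<L/4$ and the ``thickening by one layer'' in the definition of $S_i$ are used), and (ii) showing that the class it generates maps nontrivially into $H_k(\cC(n,r))$ for all $r$ up to $\hat L/4$, i.e.\ that it does not die prematurely because of the points in the \emph{neighboring} boxes $Q_j$. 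For (ii) the key point is that a class dying means it bounds a $(k+1)$-chain in the union of balls, which would force the union of balls to cover a region enclosed by $S_i$; since that region is point-free in $\cP_n$, this cannot happen before $r$ reaches the relevant scale. I would make this precise by working with $\cU(\cP_n,r)$ via the Nerve Lemma and exhibiting an explicit point (the center of $Q_i$, or a small ball around it) that stays outside $\cU(\cP_n,r)$, together with a retraction argument localizing the cycle; this deferred-to-Section-\ref{sec:topo} style argument is exactly the ``more algebraic topology'' part flagged in the paper. The probabilistic content -- that the hypotheses of the lemma actually occur for some $i$ with high probability for suitable $L,\ell$ -- is separate and is handled after this deterministic lemma.
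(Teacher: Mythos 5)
Your overall skeleton matches the paper's: take $r_1=\sqrt d\,\ell$ as a birth bound (each $S_{i,j}$ has diameter $\sqrt d\,\ell$, so the balls cover $S_i$ and are pairwise connected along adjacent boxes), take $r_2\approx L/4$ as a death bound, and use the separation of $\cU(\cP_n\cap Q_i,r)$ from the rest of the process. But the mechanism you propose for the death bound has a genuine gap. You argue that the class survives as long as the center of $Q_i$ is uncovered, i.e.\ you want to certify nontriviality of a $k$-cycle by exhibiting an uncovered \emph{point} inside the hole. This only works when $k=d-1$: for $1\le k\le d-2$ we have $H_k(\R^d\setminus\{pt\})\cong H_k(S^{d-1})=0$, so a $k$-cycle can perfectly well bound a $(k+1)$-chain that avoids any given point (a circle in $\R^3$ bounds a disk that dodges the center). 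Your own phrasing of obstacle (ii) -- ``dying would force the union of balls to cover a region enclosed by $S_i$'' -- is exactly the false step: the filling is $(k+1)$-dimensional, not $d$-dimensional, and need not sweep out the enclosed solid region.

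The paper's proof avoids this entirely with a sandwich/functoriality argument, which is the idea your proposal is missing. For $r\in[r_1,r_2]$ one has the inclusions $S_i \subset \cU(\cP_n\cap Q_i,r)\subset S_i^{(r)}:=\cU(S_i,r)$, and the key point is that $S_i\hookrightarrow S_i^{(r_2)}$ is a homotopy equivalence of spaces homotopy equivalent to $S^k$ (this is where $\ell<L/4$ is used, so the $r$-thickening does not fill the hole of the thickened cube boundary). Hence the composite $H_k(S_i)\to H_k(\cU(\cP_n\cap Q_i,r_1))\to H_k(\cU(\cP_n\cap Q_i,r_2))\to H_k(S_i^{(r_2)})$ is an isomorphism on $\F$, which forces the class to be nonzero at both intermediate stages; together with the disjointness from $\cU(\cP_n\setminus Q_i,r)$ for $r\le r_2$ this gives $\gamma_{\birth}\le r_1$ and $\gamma_{\death}\ge r_2$ directly, with the stated constant $r_2/r_1=\frac{1}{4\sqrt d}\cdot\frac{L}{\ell}$ (your accounting only reaches $\frac{1}{8\sqrt d}\cdot\frac{L}{\ell}$, a minor issue by comparison). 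You would need to replace your ``uncovered center point'' criterion with this kind of two-sided inclusion argument (or some other genuinely $k$-dimensional obstruction, e.g.\ an uncovered dual $(d-k-1)$-plane) for the proof to go through for all $1\le k\le d-1$.
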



The proof of this lemma also requires some working knowledge in algebraic topology, and therefore we postpone it to Section \ref{sec:topo}. Intuitively, the assumptions of the lemma guarantee that for every $r\in [r_1,r_2]$, where $r_1 = \sqrt{d}\ell$ and $r_2 = L/4$, the union of balls $\cU(\cP_n\cap Q_i,r)$ covers $S_i$, and is disconnected from the rest of the balls. Therefore, its shape is ``similar" to $S_i$ and forms a nontrivial $k$-cycle. Since this cycle exists through the entire range $[r_1,r_2]$ its persistence is greater than $r_2/r_1 = L/4\sqrt{d} \ell$.

Following Lemma \ref{lem:long_cycle_1}, we define the event
\[
	E_i = \set{\forall 1\le j \le m : \abs{S_{i,j} \cap \cP_n} = 1 \textrm{, and } \abs{Q_i\cap \cP_n} = m},
\]
 then $E = E_1 \cup E_2\cup\cdots\cup E_{M}$ is the event that at least one of the $Q_i$ cubes contains a large cycle.
Lemma \ref{lem:long_cycle_1} suggests that to prove there exists a large cycle it is enough to show that $E$ occurs with high probability. We start by bounding the probability of the complement event.
The next lemma shows that given the right choice of $L=L(n)$ and $\ell=\ell(n)$ we can guarantee that $E=E^{(n)}$ satisifes $\prob{E} \to 1$.


\begin{lem}\label{lem:long_cycle_exist}
Let $n\ell^d = (\log n)^{-\alpha}$ such that $\alpha > d/k$, and let $L =  \tilde A_k\Delta_k(n)\ell$ where $\tilde A_k\le \param{C_4\alpha}^{-1/k}$.
Then
\[	
\limninf \prob{E} = 1.
\]
\end{lem}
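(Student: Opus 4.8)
The plan is to show that the events $E_i$ are (nearly) independent and each has probability that is not too small, so that the probability of avoiding all of them, $\prob{\bar E} = \prob{\bigcap_{i=1}^M \bar E_i}$, tends to $0$.

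\textbf{Step 1: Estimate $\prob{E_i}$ from below.} Fix one cube $Q_i = [-L,L]^d$. The event $E_i$ requires that each of the $m$ boxes $S_{i,1},\dots,S_{i,m}$ of side $\ell$ contains exactly one point of $\cP_n$, and that the rest of $Q_i$ (which has volume $(2L)^d - m\ell^d$) contains no points. By the spatial independence of the Poisson process and the fact that $\abs{\cP_n \cap A} \sim \pois{n\vol(A)}$ for disjoint regions,
\[
	\prob{E_i} = \paren{n\ell^d\, e^{-n\ell^d}}^m \cdot e^{-n\paren{(2L)^d - m\ell^d}}.
\]
Plugging in $n\ell^d = (\log n)^{-\alpha}$, we get $\paren{n\ell^d e^{-n\ell^d}}^m = (\log n)^{-\alpha m}\paren{1+o(1)}^m$, and since $L = \tilde A_k \Delta_k(n)\ell$ we have $n(2L)^d = 2^d \tilde A_k^d \Delta_k(n)^d\, n\ell^d = 2^d\tilde A_k^d \Delta_k(n)^d (\log n)^{-\alpha}$, which tends to $0$ because $\Delta_k(n)^d = \paren{\log n/\log\log n}^{d/k}$ is dwarfed by $(\log n)^\alpha$ when $\alpha > d/k$. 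Hence $e^{-n(2L)^d} \to 1$, and
\[
	\prob{E_i} \ge (\log n)^{-\alpha m}\paren{1+o(1)}.
\]
Now $m \le C_4 (L/\ell)^k = C_4 \tilde A_k^k \Delta_k(n)^k = C_4 \tilde A_k^k \frac{\log n}{\log\log n}$. With the hypothesis $\tilde A_k^k \le (C_4\alpha)^{-1}$ we get $\alpha m \le \frac{\log n}{\log\log n}$, so $(\log n)^{-\alpha m} \ge \exp\paren{-\frac{\log n}{\log\log n}\cdot\log\log n} = e^{-\log n} = 1/n$. Thus $\prob{E_i} \ge n^{-1}(1+o(1))$; more carefully, choosing $\tilde A_k$ strictly below $(C_4\alpha)^{-1/k}$ gives $\prob{E_i} \ge n^{-1+\delta}$ for some $\delta > 0$.

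\textbf{Step 2: Combine over the $M$ cubes using independence.} The cubes $Q_1,\dots,Q_M$ are disjoint, so the events $E_1,\dots,E_M$ are mutually independent by spatial independence of $\cP_n$. Therefore
\[
	\prob{\bar E} = \prod_{i=1}^M \paren{1 - \prob{E_i}} \le \exp\paren{-\sum_{i=1}^M \prob{E_i}} \le \exp\paren{-M\, n^{-1+\delta}}.
\]
Since $M \ge C_3 L^{-d}$ and $L^{-d} = \paren{\tilde A_k\Delta_k(n)\ell}^{-d} \asymp \frac{(\log n)^\alpha}{n\,\Delta_k(n)^d}\cdot n = \frac{(\log n)^\alpha}{\Delta_k(n)^d} \cdot$ (using $n\ell^d=(\log n)^{-\alpha}$, so $\ell^{-d} = n(\log n)^\alpha$), we have $M \ge C_3 n (\log n)^\alpha \Delta_k(n)^{-d}$, which grows faster than $n^{1-\delta}$ for small $\delta$. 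Hence $M n^{-1+\delta} \to \infty$ and $\prob{\bar E}\to 0$, i.e. $\prob{E}\to 1$.

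\textbf{Main obstacle.} The delicate point is the bookkeeping in Step 1: one must verify that the exponent $\alpha m$ stays below $\log n/\log\log n$ \emph{with room to spare}, so that $\prob{E_i}$ is at least a small positive power of $1/n$ rather than merely $\Theta(1/n)$ or smaller — this is exactly why the hypothesis demands $\tilde A_k \le (C_4\alpha)^{-1/k}$ (and in fact one wants strict inequality, or a careful accounting of the $(1+o(1))$ and $C_4$-vs-exact-$m$ slack). The competing requirement is that $M$, which scales like $n(\log n)^\alpha/\Delta_k(n)^d$, must overwhelm $1/\prob{E_i}$; this is where $\alpha > d/k$ enters, ensuring $n(2L)^d\to 0$ so the "empty region" factor does not hurt, and ensuring $M$ is genuinely of order $n^{1-o(1)}$ or larger. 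Once these two inequalities are lined up, the independence argument via $1-x\le e^{-x}$ finishes everything cleanly.
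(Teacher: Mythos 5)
Your proposal is correct and follows essentially the same route as the paper: compute $\prob{E_i}=(n\ell^d)^m e^{-n(2L)^d}$ exactly via spatial independence of the Poisson process, use independence of the disjoint cubes together with $1-x\le e^{-x}$ to reduce the claim to $M\,\prob{E_i}\to\infty$, and then balance $m\le C_4(L/\ell)^k$ against $M\ge C_3L^{-d}$ using $\alpha>d/k$ and $C_4\tilde A_k^k\alpha\le 1$. The only cosmetic difference is that you first establish $\prob{E_i}\ge n^{-1+\delta}$ and separately show $M\gg n^{1-\delta}$, whereas the paper bounds the product $M\prob{E_i}$ in one computation; the substance is identical.
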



\begin{proof}

We start with the probability of $E_i$. By the spatial independence property of the Poisson process we have
\[
	\prob{E_i} = (n\ell^d)^m e^{-n(2L)^d}.
\]
and therefore, 
\[
	\prob{E^c} = \prod_{i=1}^{M} (1-\prob{E_i}) = (1-(n\ell^d)^m e^{-n(2L)^d})^{M} \le e^{-M (n\ell^d)^m e^{-n(2L)^d}}.
\]
Thus,  in order to prove that $\prob{E} \to 1$ it is enough to show that $${\mathcal E} := M (n\ell^d)^m e^{-n(2L)^d} \to \infty.$$
Recall that $M \ge C_3 L^{-d}$ and that $m \le C_4(L/\ell)^k$.
Assuming that $n\ell^d < 1$ we have,
\[
	{\mathcal E} \ge C_3 L^{-d} (n\ell^d)^{C_4(L/\ell)^k} e^{-2^dnL^d} =  C_3 L^{-d} e^{C_4(L/\ell)^k\log(n\ell^d)-2^dnL^d}
\]
Now, if $n\ell^d = (\log n)^{-\alpha}<1$ for some $\alpha >0$ and ${L}=\tilde A_k\Delta_k(n){\ell}$ for some $\tilde A_k>0$, then
\[
	nL^d = \tilde A_k^d \Delta_k^d(n)\cdot n\ell^d = \tilde A_k^d \frac{(\log n)^{d/k-\alpha}}{(\log\log n)^{d/k}}.
\]
Taking $\alpha > d/k$ yields that $nL^d\to 0$, and therefore
\[
	{\mathcal E} \ge C n \frac{(\log\log n)^{d/k}}{(\log n)^{d/k-\alpha}} e^{- C_4 \tilde A_k^k\alpha \log n},
\]
for some constant $C$. Choosing $\tilde A_k$ such that $C_4 \tilde A_k^k\alpha < 1$ we have ${\mathcal E}\to\infty$ which completes the proof.
\end{proof}

\begin{proof}[Proof of Theorem \ref{thm:main} - Lower bound]
From Lemma \ref{lem:long_cycle_exist} we have that if $n\ell^d = (\log n)^{-\alpha}$ and $L/\ell = \tilde A_k \Delta_k(n)$ then with high probability $E$ occurs. From Lemma \ref{lem:long_cycle_1}  this implies that with high probability we have a ``cubical" cycle $\gamma$ with $\pi(\gamma) \ge \tilde A_k \Delta_k(n) / 4\sqrt{d}$. Taking $A_k = \tilde A_k / 4\sqrt{d}$ completes the proof.
\end{proof}

\section{Proofs for Topological Lemmas}\label{sec:topo}

As mentioned above, the proofs for Lemmas \ref{lem:min_comp} and \ref{lem:long_cycle_1}  require some working knowledge in algebraic topology. In particular, we will be making use of the definitions of chains, cycles, boundaries and induced maps in both simplicial and singular homology. For more background, see \cite{Hatcher} or \cite{munkres_elements_1984}.  To make reading the paper fluent for readers who are less familiar with the subject, we deferred these proofs to this section. Also included in this section is the translation of Theorem \ref{thm:main} from the \cech to the Rips complex.

\subsection{Proof of Lemma \ref{lem:min_comp}}
First, we restate the lemma.

\lemmincomp*

For the sake of simplicity, we will be using homology  with coefficients in $\F = \Z/2\Z$.
Nevertheless, Lemma \ref{lem:min_comp} holds using coefficients over any field.

For every two spaces $S_1 \subset S_2$ we denote $i:S_1\hookrightarrow S_2$ as the inclusion map, and the induced map in homology will be $i_*:H_*(S_1)\to H_*(S_2)$.
For any finite set $\cP\subset [0,1]^d$ and every $r>0$, by the Nerve Lemma \ref{lem:nerve} the spaces $\cC(\cP,r)$ and $\cU(\cP,r)$ are homotopy equivalent. Therefore, there are natural maps $h: \cU(\cP, r) \to \cC(\cP, r)$ and $j: \cC(\cP, r) \to \cU(\cP, r)$ such that the induced maps $h_*: H_*(\cU(\cP, r)) \to H_*(\cC(\cP, r))$ and $j_*: H_*(\cC(\cP, r)) \to H_*(\cU(\cP, r))$ are isomorphisms.

The explicit construction of $j$ is as follows.
Each vertex in $\cC(\cP,r)$ is sent to the center of the corresponding ball. The map is then extended to every simplex by mapping it to the convex hull of the points its vertices are mapped to. Each simplex is a convex set and it is straightforward to check that in Euclidean space, the image of each simplex lies within the union of balls $\cU(\cP,r)$. This way for every $k$-simplex $\sigma\in \cC(\cP,r)$  we can define its volume $\vol_k(\sigma)$ to be the $k$-dimensional Lebesgue measure of $j(\sigma)\subset \R^d$. 

With the volume of a simplex defined, we can now define the volume of a chain. If $\gamma \in C_k(\cC(\cP,r))$ is a $k$-chain of the form $\gamma = \sum_i \alpha_i \sigma_i$ ($\alpha_i \in \set{0,1}$), then $\vol_k(\gamma) := \sum_i \alpha_i \vol_k(\sigma_i)$. In other words, the volume of a chain is defined to be the sum of the volumes of the simplexes it contains.

To prove Lemma \ref{lem:min_comp} we will be using an isoperimetric inequality related to singular cycles in $\cU(\cP,r)$ (see Theorem \ref{thm:isoper}), rather than work directly with the  simplicial cycles. To try to avoid confusion we will use $\gamma$ to refer to simplicial cycles, and $\eta$ for singular cycles. Recall that a singular $k$-simplex in $\R^d$ is a actually map $\sigma:\Delta^k\to\R^d$, where $\Delta^k$ is the standard $k$-simplex. For brevity, we will identify every singular simplex $\sigma$ with its image $\im(\sigma)\subset\R^d$, and every $k$-chain $\eta = \sum_i\alpha_i\sigma_i$ with the union $\bigcup_{i:\alpha_i\ne 0} \im(\sigma_i)\subset\R^d$.
We will also need to define the volume of a singular $k$-chain. Such a definition exists (cf. \cite{FF60}), however  we will be looking only at chains that are of the form $\eta = j(\gamma)$ where $\gamma$ is a simplicial $k$-chain in $\cC(\cP,r)$, and for those we can simply define $\vol_k(\eta) := \vol_k(\gamma)$.

Next, we define the \emph{filling radius} of a  singular $k$-cycle. Intuitively, the filling radius of a cycle measures how much we need to ``inflate" the cycle to get it filled in (so it becomes trivial). Formally,
\begin{defn}\label{def:filling}
Let $\eta$ be a compactly supported singular cycle in $\cU(\cP,r)$. A {\it filling} of $\eta$ is a $(k+1)$-chain in $\R^d$ such that $\partial \Gamma= \eta$. The {\it filling radius} $R_{fill}(\eta)$ is defined as
\[
R_{fill}(\eta) = \inf\set{\rho > 0 :  \exists \Gamma \text{  such that } \eta= \partial \Gamma \text{ and } \Gamma \subset \cU(\eta,\rho)}.
\]
In other words, $R_{fill}(\eta)$ is the smallest $\rho$ such that the ``$\rho$-thickening" of $\eta$  contains some filling $\Gamma$. 
\end{defn}

The workhorse of our proof of Lemma \ref{lem:min_comp} is the following general isoperimetric inequality due to Federer and Fleming \cite{FF60}. For a proof, see either the original article or Section 3 of Guth's expository notes on Gromov's systolic inequality \cite{Guth06}.

\begin{thm}[\bf Volume to filling radius, isoperimetric inequality] \label{thm:isoper}
Let $\eta$ be a singular $k$-cycle, such that $\vol_k(\eta) = V$. Then the filling radius of $\eta$ satisfies
\[
	R_{fill}(\eta) \le C V^{1/k},
\]
for some constant $C$ (depending on $k,d$).
\end{thm}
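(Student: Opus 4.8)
The statement is the classical Federer--Fleming isoperimetric inequality, and the plan is to prove it by the Federer--Fleming deformation (``pushing a cycle onto a skeleton'') argument; I only sketch it here, as complete details appear in \cite{FF60} and \cite{Guth06}. Since the cycles to which we will apply the theorem are piecewise linear (of the form $\eta = j(\gamma)$), it suffices to treat Lipschitz --- hence rectifiable, finite-volume --- $k$-cycles, the general singular case following by approximation. The case $V = 0$ is trivial (then $\eta$ is null-homologous with $R_{fill}(\eta)=0$), so assume $V>0$.

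First I would fix a side length $s>0$, to be pinned down at the very end, and take the cubical grid on $\R^d$ with cells of side $s$, translated generically. Then I would run the deformation inductively over dimensions $j = d, d-1, \ldots, k+1$: having arranged that the (deformed) cycle lies in the $j$-skeleton of the grid, I push it into the $(j-1)$-skeleton by radially projecting, inside each $j$-cell, away from a generic interior point $p$ of that cell. Each such projection is defined almost everywhere on the rectifiable chain, displaces every point by $O(s)$, and --- the crucial step --- increases $k$-volume by at most a factor depending only on $d$; this last estimate is obtained by averaging the pointwise Jacobian bound over the choice of $p$ inside the cell and invoking Fubini/coarea. After all stages one is left with a cellular $k$-cycle $\eta'$ supported on the $k$-skeleton with
\[
	\vol_k(\eta') \le C_1(d)\, \vol_k(\eta) = C_1 V,
\]
together with the trace of the deformation, a $(k+1)$-chain $\Gamma_0$ satisfying $\partial \Gamma_0 = \eta - \eta'$ and $\Gamma_0 \subset \cU(\eta, c_2 s)$ for a dimensional constant $c_2$ (nothing having moved farther than $O(s)$).

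To close the argument: $\eta'$ is a cellular $k$-cycle, so its $k$-volume is an integer multiple of $s^k$ (the volume of a single $k$-cell). Choosing $s := (2 C_1 V)^{1/k}$ gives $\vol_k(\eta') \le C_1 V < s^k$, which forces $\eta' = 0$. Hence $\eta = \partial \Gamma_0$ with $\Gamma_0 \subset \cU(\eta, c_2 s)$, and Definition \ref{def:filling} yields
\[
	R_{fill}(\eta) \le c_2 s = c_2 (2 C_1)^{1/k}\, V^{1/k} =: C\, V^{1/k},
\]
as claimed.

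The main obstacle --- and the technical heart of Federer--Fleming --- is the volume estimate for the projection step: one must show that projecting a $k$-rectifiable set out of a generic interior point of a cube inflates its $k$-volume by at most a constant depending only on $d$, uniformly in how close the set passes to that point. This is exactly where the averaging over $p$ enters: the integral of the projection Jacobian over the cell is finite, so some choice of $p$ works. The rest --- genericity of the grid translate and of the centers, well-definedness of all the chains, and the $O(s)$-neighborhood bound on $\Gamma_0$ --- is routine bookkeeping.
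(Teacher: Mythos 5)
The paper does not prove this theorem itself---it cites Federer--Fleming \cite{FF60} and Section 3 of Guth's notes \cite{Guth06}---and your sketch is precisely the deformation-theorem argument given in those references: push the cycle into the $k$-skeleton of a grid of side $s$ with controlled volume growth and $O(s)$ displacement, choose $s \sim V^{1/k}$ so the resulting cellular cycle must vanish, and read off the filling from the deformation trace. The sketch is correct and matches the cited proof.
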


Recall that as in Definition~\ref{def:filling}, $\eta$ is a $k$-cycle in $\cU(\cP,r)$. However, it is worth noting that for any $k$-cycle $\gamma\in \cC(\cP,r)$, there is a canonical inclusion into $\cU(\cP,r)$. This is the geometric realization of $\eta$ (although it need not be embedded). Hence, this result also holds for cycles in the \v Cech complex.

To prove Lemma \ref{lem:min_comp} we will thus need to take two steps - (1) bound the volume of a cycle $\eta$, and (2) bound death time of $\eta$ using the filling radius $R_{fill}(\eta)$.
We start with the following definition.


\begin{defn}\label{def:R_fill}
Let $X $ be a set in $\R^d$. For $\ve >0$ the set $S$ is called an $\varepsilon$-net of $X$ if:
\begin{enumerate}
\item $S\subseteq X$
\item $X \subset \cU(S,\varepsilon)$, i.e.~ $X$ is covered by the balls of radius $\varepsilon$ around $S$, and
\item For every $p_1,p_2\in S$, $\norm{p_1-p_2}\ge \varepsilon$.
\end{enumerate}
In other words, an $\varepsilon$-net is both an $\varepsilon$-cover and an $\varepsilon$-packing.
\end{defn}


$\varepsilon$-nets are a standard construction in computational geometry and exist for any metric space ~\cite{clarkson2006nearest}. They can be constructed incrementally using the  following algorithm: (1) Initialize  $S$ to be the empty set. (2) Select any uncovered point in $X$ and add it to   $S$ (3) Mark all points of distance less than $\varepsilon$ from the selected point as covered. (4) Repeat 2-3 until there are no uncovered points. T

Next, let $\cP = \set{x_1,x_2,\ldots,x_m} \subset \R^d$ and let $S\subset \cP$ be an  $\varepsilon$-net of $\cP$. By the definition of $\ve$-nets,   the following holds:
\begin{equation}\label{eq:net_cover}
\cP\subset \cU(S,\ve)
\end{equation}
\begin{equation}\label{eq:sparse}
\norm{p_i-p_j}\geq \ve\qquad\forall p_i,p_j \in S
\end{equation} 
Using \eqref{eq:net_cover} and the triangle inequality, we also have
\begin{equation}\label{eq:cover}
\cU(\cP,\ve) \subset \cU (S,2\ve) \subset \cU(\cP,2\ve).
\end{equation} 
We will use the intermediate construction $\cU(S,2\ve)$ to bound the volume of cycles.
In particular, we will need the following lemma. We use  $[\cdot]$ to denote the equivalence class in homology of a corresponding cycle. 


\begin{lem}\label{lem:bound_inter_cycle}
Let $\cP$ and $S$ be as defined above, and let  $\gamma$ be a $k$-cycle in $\cC(S, 2\ve)$. Then $\vol_k(\gamma) \le C_5 m\ve^k$, where $C_5$ depends only on $k,d$.
Consequently, for every (singular) cycle $\eta$ in $\cU(S,2\ve)$ there exists a homologous cycle $\eta'$ such that $[\eta] = [\eta']$ and such that $\vol_k(\eta') \le C_5  m \ve^k$.
\end{lem}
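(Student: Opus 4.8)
The plan is to bound the volume of a $k$-cycle in $\cC(S,2\ve)$ by bounding the volume of each of its simplices and counting how many simplices can appear. First I would note that since $S$ is an $\ve$-packing (property \eqref{eq:sparse}), any two points of $S$ are at distance at least $\ve$ apart, so each ball $B_\ve(p)$, $p\in S$, is disjoint from the others; by a standard volume-packing argument the number of points of $S$ lying in any ball of radius $8\ve$ (say) is bounded by a constant $N_0=N_0(d)$. Consequently every simplex $\sigma=[x_{i_0},\dots,x_{i_k}]$ of $\cC(S,2\ve)$ has all its vertices within distance $4\ve$ of $x_{i_0}$, hence $j(\sigma)$ lies in a ball of radius $4\ve$, so $\vol_k(j(\sigma)) \le C\ve^k$ for a constant $C=C(k,d)$ (the $k$-volume of any $k$-simplex inside a ball of radius $4\ve$ is at most, e.g., the volume of a $k$-face of a cube of side $8\ve$).

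Next I would bound the number of $k$-simplices in $\cC(S,2\ve)$. Each such simplex is determined by a vertex $x_{i_0}$ together with $k$ further vertices of $S$, all lying in $B_{4\ve}(x_{i_0})$, and by the packing bound above there are at most $N_0$ points of $S$ in that ball; hence the number of $k$-simplices containing a fixed vertex is at most $\binom{N_0}{k}$, and the total number of $k$-simplices in $\cC(S,2\ve)$ is at most $m\binom{N_0}{k}$. A $k$-cycle $\gamma$ (with $\Z/2\Z$ coefficients) is a sum of a subset of these simplices, so
\[
	\vol_k(\gamma) \le m\binom{N_0}{k} \cdot C\ve^k =: C_5\, m\,\ve^k,
\]
with $C_5$ depending only on $k$ and $d$. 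This proves the first assertion.

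For the second assertion, let $\eta$ be a singular $k$-cycle in $\cU(S,2\ve)$. By the Nerve Lemma \ref{lem:nerve} the inclusion-induced maps between $H_k(\cU(S,2\ve))$ and $H_k(\cC(S,2\ve))$ are isomorphisms via $h_*,j_*$, so there is a simplicial $k$-cycle $\gamma$ in $\cC(S,2\ve)$ with $j_*[\gamma]=[\eta]$ in $H_k(\cU(S,2\ve))$; set $\eta' := j(\gamma)$, which is a singular cycle homologous to $\eta$ with $\vol_k(\eta') = \vol_k(\gamma) \le C_5 m\ve^k$ by the definition of the volume of $j(\gamma)$ and the first part. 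The main obstacle is making the combinatorial/geometric bookkeeping precise — in particular pinning down the packing constant $N_0(d)$ and verifying that the simplex-volume bound $\vol_k(j(\sigma))\le C\ve^k$ is genuinely uniform over all simplices of $\cC(S,2\ve)$ — but these are routine volume comparisons in $\R^d$, so I expect no real difficulty beyond careful constant-tracking.
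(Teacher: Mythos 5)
Your proposal is correct and follows essentially the same route as the paper's proof: bound the per-simplex $k$-volume by $(4\ve)^k$ using the edge-length bound, count $k$-simplices via the packing property of the $\ve$-net to get at most a constant times $m$ of them, and transfer to singular cycles through the Nerve Lemma isomorphism $j_*$. One small slip: for points at pairwise distance $\ge \ve$ the balls $B_\ve(p)$ need not be disjoint --- you want $B_{\ve/2}(p)$ as in the paper --- but this does not affect the packing bound $N_0(d)$ or anything downstream.
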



\begin{proof}

The $k$-dimensional volume of $\gamma$ is the sum of the $k$-volumes of the simplexes in $\gamma$. This can be bounded by the maximal volume induced by any one simplex multiplied by the number of simplexes in $\gamma$. 

To bound the number of simplexes, first observe that $\gamma$ is supported on $S$. By \eqref{eq:sparse} every pair of vertices $p_1,p_2 \in S$ are at distance $\norm{p_1-p_2}\ge \ve$. So the balls centered at points in $S$ of radius $\ve/2$ are disjoint. This implies, by a sphere packing bound, that every vertex in $S$ has only a bounded number of neighboring vertices in $\cC(S,2\ve)$, namely the maximum number of disjoint balls of radius $\ve/2$ that can fit in a ball of radius $4\ve$.  This sphere packing number is clearly bounded above by $8^d$, the ratio of the volumes of these spheres. This implies that every vertex is contained in at most $\binom{8^d}{k}$  $k$-dimensional faces and since by assumption there are at most $m$ vertices in $\cP$ and hence $S$, there are at most $m\binom{8^d}{k}$ k-dimensional faces total. 

To bound the maximal volume of the single simplexes, observe that  the longest edge in any simplex of $\gamma$ has length at most $4\ve$. Therefore, for every simplex $\sigma$ in $\gamma$ we have $\vol_k(\sigma) \le (4 \ve)^k$ (the volume of a cube of side $4\ve$). 

To conclude, we have shown that $\gamma$ has at most $m\binom{8^d}{k}$ simplexes, the volume of each of them is bounded by $(4\ve)^k$. Therefore, $\vol_k(\gamma) \le C_5 m \ve^k$ where $C_5 = 4^k\binom{8^d}{k}$.

Next, let $\eta$ be a cycle in $\cU(\cS,2\ve)$. Since the map $j_*:H_*(\cC(S,2\ve))\to H_*(\cU(S,2\ve))$ is an isomorphism,
we can  look at the homology class $j_*^{-1}([\eta])$, and take a representative cycle $\gamma$. Defining $\eta' = j(\gamma)$ then $[\eta'] = j_*\circ j^{-1}_*([\eta]) = [\eta]$, so $\eta$ and $\eta'$ are homologous. In addition, since $\gamma$ is a cycle in $\cC(S,2\ve)$ and $\eta'=j(\gamma)$, we have that $\vol_k(\eta') = \vol_k(\gamma) \le C_5 m\ve^k$. That completes the proof.

\end{proof}

For the next lemma, consider the following sequence of maps in homology (induced by inclusion maps),

%
\begin{center}
\begin{tikzpicture}
\node (a) at (0,0 ) {$ H_k (\cU(\cP,\ve))$} ;
\node (b) at (4,0 ) {$H_k (\cU(S,2\ve))$} ;
\node (c) at (8,0 ) {$H_k (\cU(\cP,2\ve)) $} ;
%
%
\draw[->] (a) -- node [auto] {$i_*$} (b);
\draw[->] (b) -- node [auto] {$i_*$}   (c);
%
%
%
%
%
%
%
%
\end{tikzpicture}
\end{center}



\begin{lem}[\bf Vertices to volume] \label{lem:v2v} Let $\cP = \set{x_1,x_2,\ldots,x_m} \subset \R^d$. Suppose that $\eta$ is an arbitrary $k$-cycle in  $\cU(\cP, \ve)$, and let $i\circ i(\eta)$ be its image in $\cU(\cP,2\ve)$. Then there exists a $k$-cycle $\eta'$  in $\cU(\cP,2\ve)$, homologous to $i\circ i(\eta)$, such that $\vol_k(\eta') \le C_5 m\ve^k$ for some constant $C_5>0$ depending only on $k$ and $d$.
\end{lem}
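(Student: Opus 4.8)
The plan is to reduce Lemma~\ref{lem:v2v} to Lemma~\ref{lem:bound_inter_cycle} by inserting the $\varepsilon$-net $S\subset\cP$ between $\cU(\cP,\ve)$ and $\cU(\cP,2\ve)$. First I would invoke the inclusions in \eqref{eq:cover}, namely $\cU(\cP,\ve)\subset\cU(S,2\ve)\subset\cU(\cP,2\ve)$, which factor the ``doubling'' inclusion $\cU(\cP,\ve)\hookrightarrow\cU(\cP,2\ve)$ through $\cU(S,2\ve)$, as displayed in the commutative diagram preceding the lemma statement. Applying the homology functor, the composite $i_*\circ i_*$ on $H_k$ sends $[\eta]$ first to a class $[\eta_1]$ in $H_k(\cU(S,2\ve))$, where $\eta_1$ is just $\eta$ viewed as a singular cycle in the larger space.

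Next I would apply Lemma~\ref{lem:bound_inter_cycle} to the cycle $\eta_1$ in $\cU(S,2\ve)$: it produces a homologous cycle $\eta'_1$ (in $\cU(S,2\ve)$) with $\vol_k(\eta'_1)\le C_5 m\ve^k$, where in fact $\eta'_1 = j(\gamma)$ for a simplicial representative $\gamma$ of $j_*^{-1}([\eta_1])$ in $\cC(S,2\ve)$, so its volume is well-defined in the sense fixed earlier. Pushing $\eta'_1$ forward along the inclusion $\cU(S,2\ve)\hookrightarrow\cU(\cP,2\ve)$ gives a cycle $\eta'$ in $\cU(\cP,2\ve)$. Since homology is functorial, $[\eta']$ equals the image of $[\eta_1]$ under $i_*$, which is exactly $[i\circ i(\eta)]$; hence $\eta'$ is homologous to $i\circ i(\eta)$ as claimed. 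The volume does not increase under this last inclusion because $\eta' = \eta'_1$ as a subset/chain of $\R^d$ — the inclusion $\cU(S,2\ve)\hookrightarrow\cU(\cP,2\ve)$ is the identity on underlying point sets — so $\vol_k(\eta') = \vol_k(\eta'_1)\le C_5 m\ve^k$, with the same constant $C_5 = 4^k\binom{8^d}{k}$ depending only on $k$ and $d$.

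There is essentially no hard analytic step here: Lemma~\ref{lem:v2v} is a bookkeeping corollary of Lemma~\ref{lem:bound_inter_cycle} together with the net inclusions. The one point requiring minor care is the compatibility of the volume notion along the two isomorphisms $j_*$: one must check that replacing $\eta_1$ by the Nerve-Lemma representative $j(\gamma)$ and then pushing into $\cU(\cP,2\ve)$ yields a chain whose volume is measured by the same simplicial $\gamma$, which is immediate from the definitions $\vol_k(j(\gamma)):=\vol_k(\gamma)$ given before Lemma~\ref{lem:bound_inter_cycle}. I would also remark that the cycle $\eta$ in the hypothesis need not be of the form $j(\gamma)$ — it is ``arbitrary'' — but this causes no issue since we only ever assign a volume to the output cycle $\eta'$, not to the input. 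Thus the main (and only) obstacle is stating the functoriality chain cleanly; the quantitative content is entirely contained in the already-proved Lemma~\ref{lem:bound_inter_cycle}.
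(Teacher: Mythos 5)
Your proposal is correct and follows essentially the same route as the paper's own proof: factor the doubling inclusion through $\cU(S,2\ve)$, apply Lemma~\ref{lem:bound_inter_cycle} to the image of $\eta$ there, and push the resulting bounded-volume representative forward, noting that inclusion preserves both the homology class and the volume. Your additional remarks on the volume convention and on $\eta$ being arbitrary are accurate but not needed beyond what the paper already records.
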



\begin{proof}

Let $i(\eta)$ be the inclusion of $\eta$ into $\cU(S,2\ve)$. From Lemma \ref{lem:bound_inter_cycle} we have that there exists a cycle $\eta''$ in $\cU(S,2\ve)$ such that $[\eta''] = [i(\eta)]$ and such that $\vol_k(\eta'') \le C_5m\ve^k$. Defining $\eta' = i(\eta'')$ then $[\eta'] = i_*([\eta'']) = i_*([i(\eta)]) = [i\circ i(\eta)]$, and since the inclusion does not change the volume we have $\vol_k(\eta') = \vol_k(\eta'') \le C_5m\ve^k$. That completes the proof.

\end{proof}


Finally, we relate the filling radius to the persistence of the cycles.


\begin{lem}[\bf Filling radius to persistence] \label{lem:frtp}
If $\eta$ is a cycle in $\cU(\cP, r)$, with a filling radius $R_{fill}(\eta) = R$, then  $\eta_{death} \le R+r$.
\end{lem}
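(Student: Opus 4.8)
The plan is to unpack the definitions of filling radius and death time and connect them through the geometry of the union-of-balls filtration. Suppose $\eta$ is a $k$-cycle in $\cU(\cP,r)$ with $R_{fill}(\eta)=R$. By Definition \ref{def:filling}, for every $\rho > R$ there is a $(k+1)$-chain $\Gamma$ in $\R^d$ with $\partial\Gamma = \eta$ and $\Gamma \subset \cU(\eta,\rho)$, where $\cU(\eta,\rho)$ denotes the $\rho$-thickening of (the support of) $\eta$. The key geometric observation is that since $\eta$ is supported inside $\cU(\cP,r)$, every point of $\eta$ lies within distance $r$ of some point of $\cP$; hence a point within distance $\rho$ of $\eta$ lies within distance $r+\rho$ of $\cP$, giving the inclusion
\[
	\cU(\eta,\rho) \subset \cU(\cP, r+\rho).
\]
Therefore $\Gamma \subset \cU(\cP,r+\rho)$, and since $\partial\Gamma=\eta$, the cycle $\eta$ bounds in $\cU(\cP,r+\rho)$, i.e.\ its image under the inclusion-induced map $H_k(\cU(\cP,r)) \to H_k(\cU(\cP,r+\rho))$ is zero.

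By Definition \ref{def:death}, $\eta_{death}$ is the infimum of the radii at which $\eta$ maps to zero, so the above shows $\eta_{death} \le r+\rho$ for every $\rho > R$. Letting $\rho \downarrow R$ gives $\eta_{death} \le R+r$, as claimed. The only mild subtlety is the passage from the open/closed thickening conventions and the infimum in the definition of $R_{fill}$ to the (closed, or $\le$) inequality $\eta_{death}\le R+r$; this is handled by the $\rho > R$ and then $\rho \downarrow R$ argument, using that $\eta_{death}$ is defined as a minimum over a range for which membership in the kernel is a closed condition (the homology of the union of balls is constant except at finitely many radii, so the infimum is attained).

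I do not expect a real obstacle here — the content is entirely the inclusion $\cU(\eta,\rho)\subset\cU(\cP,r+\rho)$ combined with the fact that a filling realized inside $\cU(\cP,r+\rho)$ witnesses the death of $\eta$ by that radius. The one place to be careful is to make sure the filling $\Gamma$, which a priori is just a singular $(k+1)$-chain in $\R^d$, genuinely gives a nullhomology of $\eta$ \emph{in} $\cU(\cP,r+\rho)$: this is immediate once we know $\Gamma$ is supported in $\cU(\cP,r+\rho)$, since singular homology is computed from chains supported in the space.
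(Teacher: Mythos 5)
Your proof is correct and follows essentially the same route as the paper's: the triangle-inequality inclusion $\cU(\eta,\rho)\subset\cU(\cP,r+\rho)$ puts a filling of $\eta$ inside $\cU(\cP,r+\rho)$, witnessing its death by that radius. You are in fact slightly more careful than the paper, which tacitly assumes a filling exists at $\rho=R$ exactly; your limiting argument $\rho\downarrow R$ (justified by the finitely many critical radii of the filtration) cleanly handles the infimum in the definition of $R_{fill}$.
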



\begin{proof}
Since $\eta$ is a cycle in $\cU(\cP,r)$, then by the triangle inequality we have that $\cU(\eta, R)\subset \cU(\cP,r+R)$. By the definition of $R_{fill}$ (see Definition \ref{def:R_fill}),
this implies that there exists a $(k+1)$ cycle $\Gamma$ in $\cU(\cP,R+r)$ such that $\eta = \partial \Gamma$.
Therefore, in $\cU(\cP,R+r)$ the cycle $\eta$ is already trivial which implies that $\eta_{death} \le R+r$.
\end{proof}


We are now ready to prove Lemma \ref{lem:min_comp}.


\begin{proof}[Proof of Lemma \ref{lem:min_comp}]

Let $\gamma\in \PH_k(n)$ with $\gamma_{birth} = r$. Suppose that the simplexes constructing $\gamma$ are contained in a connected component with $m$ vertices of $\cC(n,r) = \cC(\cP_n,r)$. Let $\cP\subset \cP_n$ be the set of vertices in this connected component, then necessarily $\gamma$ is also a cycle in $\cC(\cP,r)$.

Next, take the corresponding cycle $\eta = j(\gamma)$ in $\cU(\cP,r)$.
According to Lemma \ref{lem:v2v} there exists a cycle $\eta'$ in $\cU(\cP,2r)$, homologous to $i\circ i(\eta)$, such that $\vol_k(\eta') \le C_5 m r^k$, and from Theorem \ref{thm:isoper} this implies that $R_{fill}(\eta') \le C(C_5mr^k)^{1/k} = C' m^{1/k} r$.
Using Lemma \ref{lem:frtp} we then have that $\eta'_{death} \le r(C' m^{1/k} + 2)$. Since $\eta'$ and $i\circ i(\eta)$ are homologous, then $\eta$ and $\eta'$ share the same death time, which in turn implies that $\gamma$ and $\eta'$ share the same death time. Therefore, $\pi(\gamma) \le C'm^{1/k}+2 \le C'' m^{1/k}$. In other words, if $\pi(\gamma) = p$ then we have that $p^k \le m (C'')^k$. Taking $C_1 = 1/(C'')^k$ completes the proof.
\end{proof}

\subsection{Proof of Lemma \ref{lem:long_cycle_1}}
We first restate the lemma.

\lemlongcyc*

\begin{proof}
Let $r_1 = \sqrt{d} \ell$ and $r_2 = L/4$. The assumptions that $\abs{S_{i,j} \cap \cP_n} = 1$ for every $1\le i \le m$ and $\abs{Q_i\cap \cP_n} = m$ assure that:
\begin{itemize}
\item For every $r\ge r_1$ the set  $\cU(\cP_n\cap Q_i,r)$ is connected and covers $S_i$ ;
\item For every $r\le r_2$ the sets $\cU(\cP_n\cap Q_i,r)$ and $\cU(\cP_n \backslash Q_i,r)$ are disjoint. 
\end{itemize}
In other words for every $r\in[r_1,r_2]$ the set $\cU(\cP_n\cap Q_i,r)$ is a connected component of $\cU(n,r)$. We will show that this component contains the desired cycle. 

Defining $S_i^{(r)} = \cU(S_i,r)$, for every $r\in [r_1,r_2]$ we have
\[
	S_i \subset \cU(\cP_n\cap Q_i, r) \subset S_i^{(r)}.
\]
In addition, for every $r\in[r_1,r_2]$, the inclusion $S_i\hookrightarrow S_i^{(r)}$ is a homotopy equivalence and both spaces are homotopy equivalent to a $k$-dimensional sphere, and in particular have a nontrivial $k$-cycle. A standard argument in algebraic topology (using the induced maps in homology) yields that $\cU(\cP_n \cap Q_i,r)$ must have a nontrivial $k$-cycle as well. Intuitively, since the $k$-cycle in $S_i$ ``survives" the inclusion into $S_i^{(r)}$,  it must also be present in the intermediate set $\cU(\cP_n \cap Q_i, r)$.
Now consider the following sequence induced by the inclusion maps.

\[
	H_k(S_i)\xrightarrow{i_*} H_k(\cU(\cP_n\cap Q_i, r_1))\xrightarrow{i_*} H_k(\cU(\cP_n\cap Q_i, r_2))\xrightarrow{i_*} H_k(S_i^{(r_2)})
\]
Let $\eta$ be a nontrivial cycle in $S_i$, then $i_*\circ i_*\circ i_* ([\eta])\ne 0$ since by assumption $i_*\circ i_*\circ i_*(\eta)$ is a nontrivial cycle in $S_i^{(r_2)}$ as well. Consequently, we must have $i_*([\eta])\ne 0$ and $i_*\circ i_* ([\eta])\ne 0$. Next, define $\gamma = h\circ i(\eta)$ - a cycle in $\cC(\cP_n,r_1)$, then $\gamma$ is nontrivial and so does $i(\gamma)$ in $\cC(\cP_n,r_2)$.
Therefore, $\gamma_{birth} \le r_1$ and $\gamma_{death} \ge r_2$, and then
\[
	\pi(\gamma) = \frac{\gamma_{death}}{\gamma_{birth}} \ge \frac{r_2}{r_1} = \frac{1}{4\sqrt{d}}\times\frac{L}{\ell},
\]
this completes the proof.
\end{proof}

\subsection{Proof of Theorem \ref{thm:main}  for the Vietoris-Rips Filtration}\label{sec:rips}

\begin{proof}
Let $r_2 > 2r_1$, and consider the following sequences of maps induced by the inclusions in \eqref{eq:rips_cech}.
\[
\begin{split}
	H_k(\cC(n,r_1))\xrightarrow{i_*} H_k(\cR(n,r_1)) \xrightarrow{i_*} H_k(\cR(n,r_2/2)) \xrightarrow{i_*} H_k(\cC(n,r_2)) 
\end{split}
\]
Suppose  there exists a cycle $\gamma$ in $\cC(n,r_1)$ with  $\gamma_{death}\ge r_2$. Then necessarily $i_*\circ i_*\circ i_*([\gamma])\ne 0$, which implies that both $i_*([\gamma])\ne 0$ and $i_*\circ i_*([\gamma])\ne 0$. Therefore, there exists a nontrivial cycle $\gamma' = i(\gamma)$ in $\cR(n,r_1)$ such that $\gamma'_{death} \ge r_2/2$, and consequently $\pi(\gamma') \ge r_2/2r_1$. Thus,
\begin{equation}\label{eq:CR_1}
	\prob{\Pi_k^{\cC}(n) \ge A_k\Delta_k(n)} \le \prob{\Pi_k^{\cR}(n) \ge A_k\Delta_k(n)/2}.
\end{equation}

On the other hand, we can look at the following sequence for $r_2 > 2r_1$,
\[
\begin{split}
	H_k(\cR(n,r_1))\xrightarrow{i_*} H_k(\cC(n,2r_1)) \xrightarrow{i_*} H_k(\cC(n,r_2)) \xrightarrow{i_*} H_k(\cR(n,r_2)) .
\end{split}
\]
Suppose that there exists a cycle $\gamma$ in the Rips filtration with $\gamma_{birth} \le r_1$ and $\gamma_{death}\ge r_2$. Then there exists a cycle $\gamma'$ in the \cech filtration with $\gamma'_{birth} \le 2r_1$ and $\gamma'_{death} \ge r_2$, and therefore,  $\pi(\gamma') \ge r_2/2r_1$. Thus,
\begin{equation}\label{eq:CR_2}
	\prob{\Pi_k^{\cC}(n) \le B_k\Delta_k(n)} \le \prob{\Pi_k^{\cR}(n) \le 2B_k\Delta_k(n)}.
\end{equation}

To conclude we have that
\[
	\prob{A_k \le \frac{\Pi_k^{\cC}(n)}{\Delta_k(n)} \le B_k} \le \prob{A_k/2 \le \frac{\Pi_k^{\cR}(n)}{\Delta_k(n)} \le 2B_k}.
\]
Since the left hand side converges to $1$ so does the right hand side, which completes the proof.
\end{proof}

\section{Numerical Experiments}\label{sec:experimental}
In this section, we present numerical simulations demonstrating the behavior of $\Pi_k(n)$  for the \cech complex in dimensions $d=2, 3\text{ and }4$. The experiments were run by generating a Poisson  process with rate $n$ in the unit cube of the appropriate dimension.
To generate randomness we used the standard implementation of the Mersenne Twister~\cite{mersenne_twister}. The persistence diagram computation was done using the PHAT library ~\cite{phat}.

For each sample, the \v Cech complex is built until the point of coverage (or very near coverage), since past coverage the complex is contractible and there are no changes in homology.
In dimension  2 and 3 , instead of the \v Cech filrtration, we use the $\alpha$-shape filtration~\cite{alphashape} which is based on the Delaunay triangulation. To compute the triangulations, we used the CGAL library~\cite{cgal}. The key benefit of this construction is that the simplicial complex is of a smaller size, e.g.~in 2 dimensions the size of the Delaunay triangulation is at most quadratic in the number of points. The persistence diagram are the same since for any parameter $r$, the $\alpha$-complex and \v Cech complex are homotopy equivalent (see \cite{edelsbrunner1993union}), giving rise to isomorphic homology groups.  


\begin{figure}[tbp]
\centering
\begin{subfigure}[b]{0.4\textwidth}
\includegraphics[width=\textwidth]{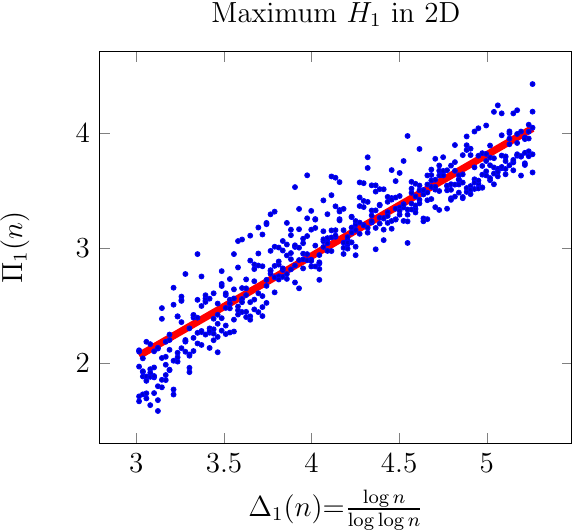}
\caption{}
\end{subfigure}
\begin{subfigure}[b]{0.4\textwidth}
\includegraphics[width=\textwidth]{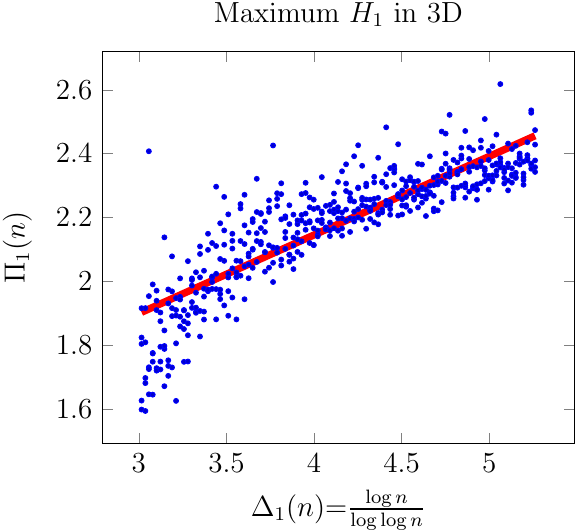}
\caption{}
\end{subfigure}

\begin{subfigure}[b]{0.4\textwidth}
\includegraphics[width=\textwidth]{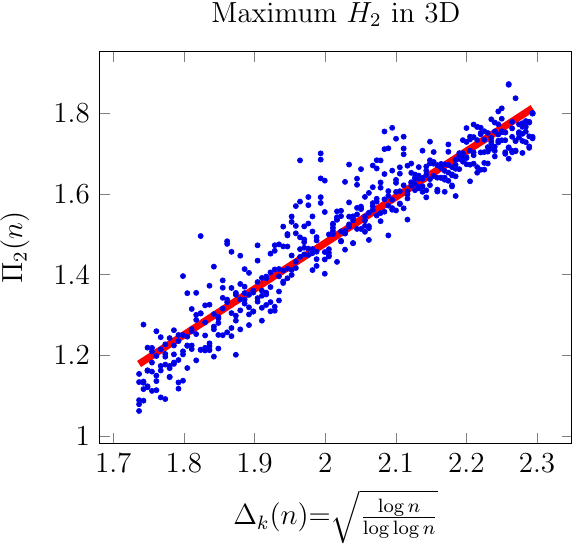}
\caption{}
\end{subfigure}
\begin{subfigure}[b]{0.4\textwidth}
\includegraphics[width=\textwidth]{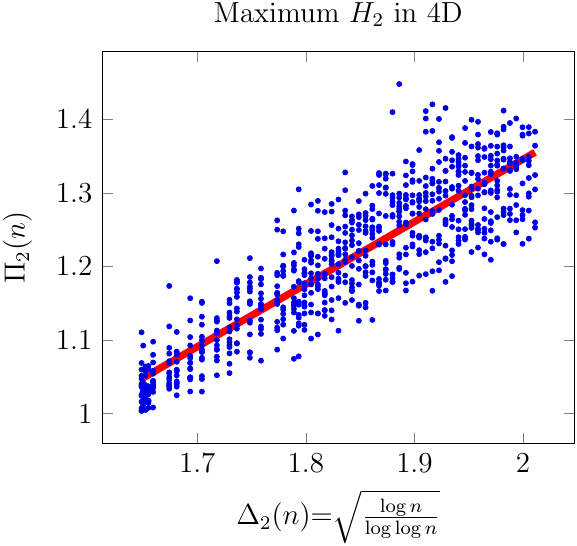}
\caption{}
\end{subfigure}
\caption{Plots of maximum persistence for the \cech filtration, against the proper scaling term $\Delta_k(n)$. We tested different dimensions for the homology and for the ambient space. (A)  $H_1$ in $\R^2$, (B) $H_1$ in $\R^3$, (C) $H_2$ in $\R^3$(D) $H_2$ in $\R^4$. Each point is the result of a different trial, and the red line represents the best linear fit. For (A), (B), and (C) the range of points is $n=10^2$ to $10^6$. For (D), the range is roughly $n=10^2$ to $10^4$. The reduced range is a consequence of computational considerations - the number of simplices grows quickly as the dimension increases.}
\label{fig:H1cech}
\end{figure}


	The results are shown in Figure~\ref{fig:H1cech}. The number of points was varied from 100 to 1,000,000  (in higher dimensions, this was reduced due to computational complexity).  We tested the behavior of $\Pi_k(n)$ for a few values of $k$, and $d$ (the ambient dimension).  For $d=2$, the only interesting case is $k=1$, namely $H_1$ (A). The resulting plot shows the maximal persistence $\Pi_1(n)$ against  $\Delta_1(n) = \log n / \log \log n$. For each value of $n$, we repeated the experiment several times.  Here, we also plot the best linear fit with the constant 0.88. We also show the results for $H_1$ when $d=3$ (B),  $H_2$ when $d=3$ (C), and $H_2$ when $d=4$ (D). We note that we performed a the same tests for the Rips filtration and the results were the same (but with different slopes).
		
There are two particularities in these plots -  the first is that  the spread is large for any one value of $n$. While it follows the straight line well it does not seem to converge to a single value. However, the resulting distributions do seem to converge, albeit slowly, as can be seen in Figure \ref{fig:dist} . The histograms  (A), (B), and (C) present the resulting ratio for 400, 2000, and 2,000,000 points, respectively. While there is a deviation, the distribution does become more concentrated around its peak. 		


 \begin{figure}[htbp]
 \begin{subfigure}[b]{0.3\textwidth}
\includegraphics[width=\textwidth]{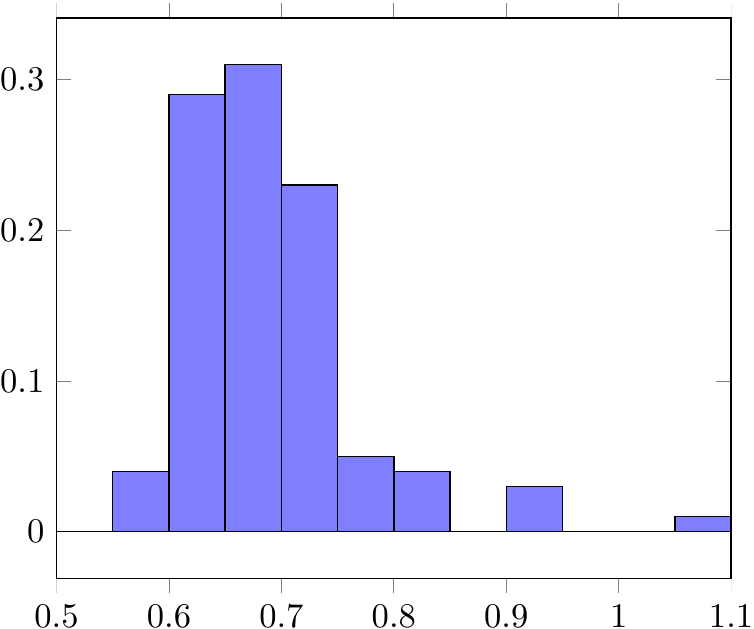}
                \caption{}
        \end{subfigure}  
         \begin{subfigure}[b]{0.3\textwidth}
\includegraphics[width=\textwidth]{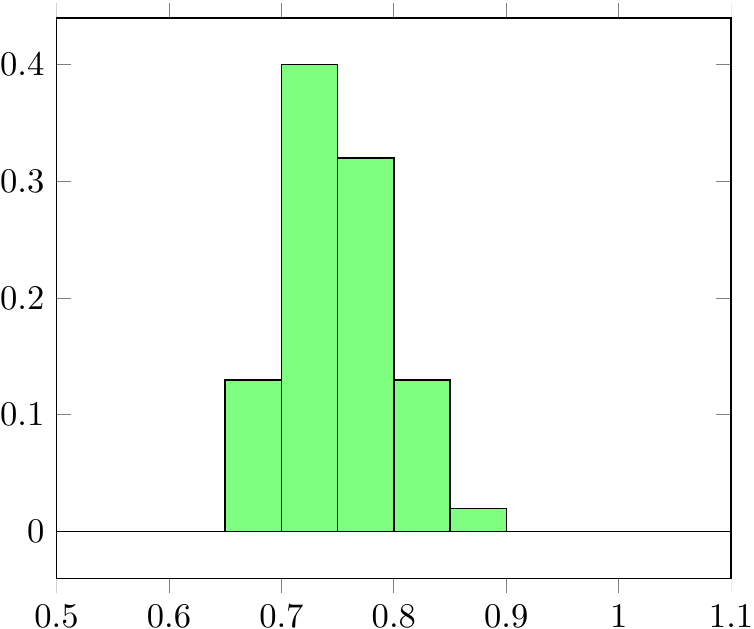}
   \caption{}
        \end{subfigure}  
 \begin{subfigure}[b]{0.3\textwidth}
\includegraphics[width=\textwidth]{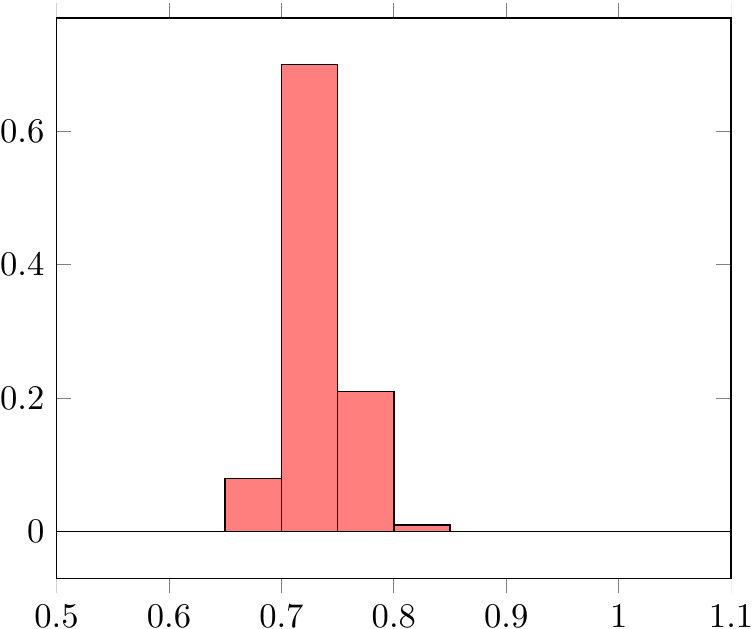}
   \caption{}
        \end{subfigure}  
\caption{Histograms of empirical $\Pi_1(n)$ in 2D normalized by $\frac{\log n}{\log \log n }$ for (A) 400 points (B) 2000 points (C) $2\times 10^6$ points.}
\label{fig:dist}
\end{figure}


The second issue is is that at smaller $n$, the maximum value drops off faster than linearly. This can be seen particularly in of Figure~\ref{fig:H1cech} (B). This phenomenon could be explained by saying that $n$ is simply not large enough for the limiting behavior to apply. Nevertheless, we tried to investigate this issue further, by considering the \v Cech complex on the flat torus ($\mathbb{T}^2$) by identifying the edges of the unit square. This part was computed using the periodic triangulations provided in CGAL \cite{cgal}. We generated points in the unit square and then computed the maximal persistence using the Euclidean metric  (e.g. the standard case) and using the metric on the flat torus. This was repeated 100 times for each value of $n$. We computed the mean and standard deviation for each value and show the results in Figure~\ref{fig:periodic}.  The red line shows the mean for the unit 
square. The red shaded region showing the interval of the mean +/- the standard deviation. The blue line (and the blue region) are the mean (and standard deviation) for the maximal persistence on the flat torus. The purple region is region where the   blue and red regions overlap. As can be seen, for most $n$ the maximal persistence is identical, indicated that the longest lived cycles did not occur near the boundary.   The difference is only visible for small  values of $n$ (where there are only a few points). At  low values of $n$, the results on the torus demonstrate a more linear behavior. This provides strong evidence that the non-linearity is due to boundary effects.

For the case of the flat torus, there are two essential one dimensional homology classes (cycles with infinite persistence) corresponding to the generators of the torus. For the above results, we ignore the essential classes.

\begin{figure}[tbp]
\centering
        \begin{subfigure}[b]{0.4\textwidth}
\includegraphics[width=\textwidth]{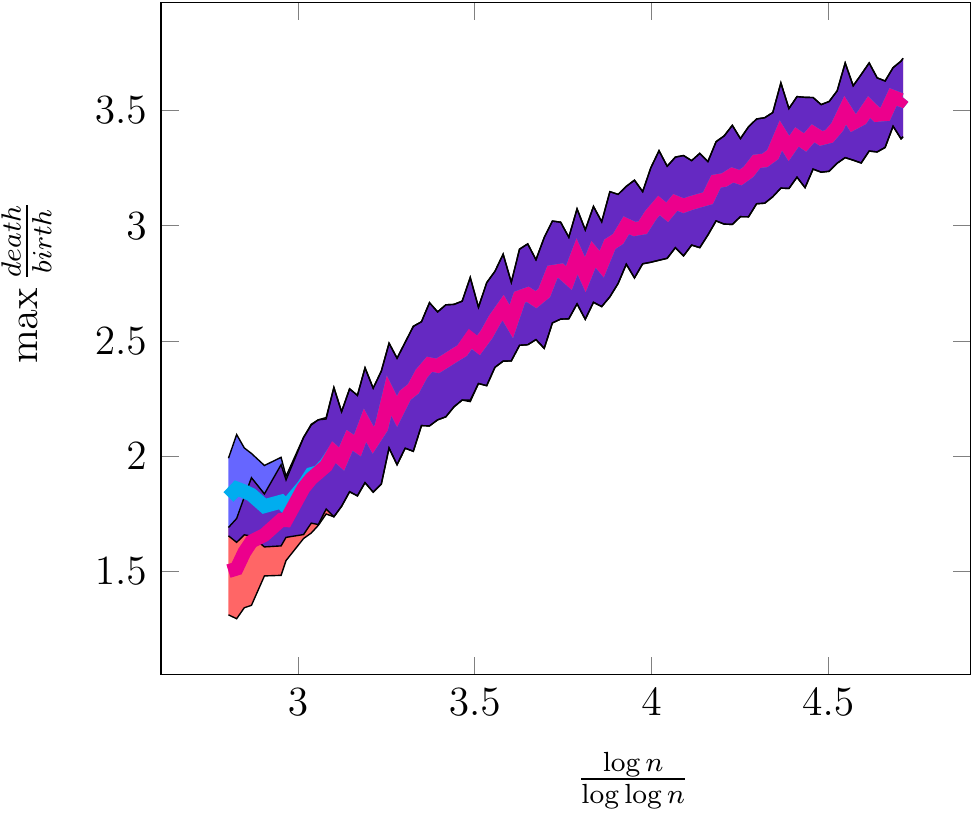}
                \caption{}
        \end{subfigure}
       \begin{subfigure}[b]{0.4\textwidth}
         
\includegraphics[width=\textwidth]{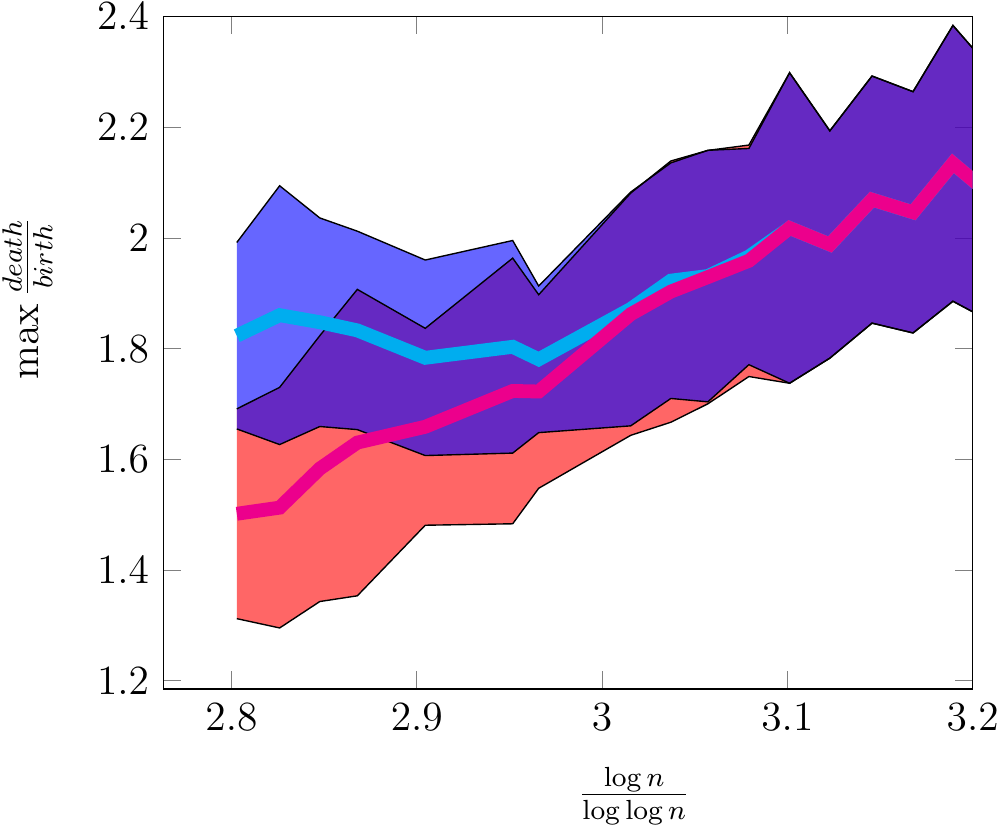}
                \caption{}
        \end{subfigure}  
      
\caption{The effect of boundaries is larger for a small number of points. The plot shows the mean maximum persistence for $H_1$ as a function of $\log n/ \log \log n$ with the shaded region showing interval corresponding to $+/-$ the standard deviation. The red line (and the red shaded region) shows the maximum persistence in the unit square, while the blue line shows the maximum persistence for the same point set in the flat torus. 
The purple region shows that for most values of $n$, the value of maximal persistence is the same in both cases. This is illustrated by an equal mean as well as the overlapping shaded regions (shown as purple). In (A), we see the plot up to several thousand points, while in (B) we show a close-up for small values of $n$, where the results differ.
}
\label{fig:periodic}
\end{figure}

\section{Conclusion}

In this paper we examined the maximum persistence of cycles in the persistent homology of either the random \cech or Rips complexes, generated by a homogeneous Poisson process in the unit cube. We showed that with a high probability we have $\Pi_k(n)\sim \param{\frac{\log n}{\log \log n}}^{1/k}$. This paper proves that upper and lower bounds exist, and it remains future work to prove stronger limiting theorems such as a law of large numbers or a central limit theorem for $\Pi_k(n)$. 

We note that while we focused on the Poisson process on the cube for simplicity, similar results can be proved with minor adjustments for non-homogeneous Poisson processes as well, and for many compact spaces other than the cube (for example, compact Riemannian manifolds). The scale of the maximum persistence should be the same ($\Delta_k(n)$), but the exact constants will be different. An important observation in this case  is that $\Pi_k(n)$ should be defined as the maximum persistence among all the ``small" cycles, i.e.~ignoring the cycles that belong to the homology of the underlying space.
Recall, that these small cycles are considered the noise in various TDA applications. Thus, revealing their distribution would be an important first step in performing noise filtering or reduction. At this point we would like to offer the following insight related to the ``signal to noise ratio" (SNR), in this kind of topological inference problems. 

Suppose that the samples are generated from a distribution on a compact manifold $\cM$, and our interest is in recovering its homology $H_k(\cM)$. The cycles that belong to the homology of $\cM$ will  show up in the \cech complex at some radius, and we can denote by $\Pi_k^{\cM}(n)$ the \emph{minimal} persistence of these cycles (in the \cech filtration). One question we might ask is - how do the signal and the noise compare? in other words - what can we say about $\Pi_k^{\cM}(n) / \Pi_k(n)$? 

The analysis we have so far already offers a preliminary answer to this question. For every cycle $\gamma$ that belongs to the homology of $\cM$ we know two things: (a) $\gamma_{death}$ is approximately constant (depending on the geometry of $\cM$), and (b) $\gamma_{birth} \le C\param{\frac{\log n}{n}}^{1/d}$ (since there are no more changes in homology past coverage, see Theorem 4.9 in \cite{BM14}). Therefore, we can conclude that
\[
	\Pi_k^{\cM}(n) \ge C' \param{\frac{n}{\log n}}^{1/d}.
\]
Combining this bound with our bound for $\Pi_k(n)$ we have for example, that for any $\eps>0$,
\[
	\frac{\Pi_k^{\cM}(n)}{\Pi_k(n)} \ge n^{1/d-\eps}.
\]
To get a better estimate for this ratio, we will need to refine our results for $\Pi_k(n)$, as well as to make more precise statements about the birth times of cycles that belong to $\cM$ (instead of using a crude upper bound). 

To conclude, we believe that the results in this paper are a promising lead in the direction of noise filtering for topological inference, and will be very useful for future analysis of probabilistic models in TDA.

\section*{Acknowledgements}
The authors would like to thank Larry Guth for helpful conversations
about isoperimetry, and to Robert Adler and Sayan Mukherjee for useful comments and fruitful discussions. 
\bibliographystyle{plain}
\bibliography{lbrefs}

\end{document}